\documentclass[11pt]{amsart}

\usepackage{amsmath,amssymb,amsthm,mathtools}
\usepackage{booktabs}
\usepackage{hyperref}
\usepackage{tikz}

\newcommand{\Sym}{S}
\newcommand{\Av}{\operatorname{Av}}
\newcommand{\Sh}{\operatorname{Sh}}
\newcommand{\mex}{\operatorname{mex}}
\newcommand{\sg}{\operatorname{sg}}
\newcommand{\id}{\operatorname{id}}

\theoremstyle{plain}
\newtheorem{theorem}{Theorem}[section]
\newtheorem{proposition}[theorem]{Proposition}
\newtheorem{lemma}[theorem]{Lemma}
\newtheorem{corollary}[theorem]{Corollary}
\newtheorem{conjecture}[theorem]{Conjecture}

\theoremstyle{definition}
\newtheorem{definition}[theorem]{Definition}
\newtheorem{example}[theorem]{Example}

\theoremstyle{remark}
\newtheorem{remark}[theorem]{Remark}

\title[Permutation Avoidance Game]{A Permutation Avoidance Game with Reverse Replies and Monotone Traps}
\author{Henning Ulfarsson}
\date{\today}

\begin{document}

\begin{abstract}
  We study the impartial game PAP (``permutations avoiding patterns''), in which players take turns choosing patterns to avoid. We define a set of length $k$ patterns, $B_k$,
  and show that it is the unique minimal
  monotone-forcing subset of $\Sym_k$: every sufficiently long permutation that avoids $B_k$ is
  monotone, and every monotone-forcing subset of $\Sym_k$ must contain $B_k$.  We prove a quadratic upper bound for the
  monotone-forcing threshold, and determine the exact thresholds for $k=3,4,5,6$.  We use properties
  of the sets $B_k$ to prove that a reverse-reply strategy wins PAP on $\Sym_n$ when $k=4$
  for all $n \geq 10$; for $k=3$, the same strategy can be analysed directly.  We conjecture that it is a winning strategy for all $k$ and $n$ sufficiently large.
\end{abstract}

\maketitle

\section{Introduction}

Fix $n \geq 1$ and let $\Sym_n$ be the set of permutations of length $n$ in one-line notation.
For $\pi = \pi(1)\cdots\pi(n) \in \Sym_n$, we write $\pi^r = \pi(n)\cdots\pi(1)$ for the
\emph{reverse} and $\pi^c = (n{+}1{-}\pi(1))\cdots(n{+}1{-}\pi(n))$ for the \emph{complement}.
We say that $\pi$ \emph{contains} a pattern $p \in \Sym_k$ if there exist indices
$1 \leq i_1 < \cdots < i_k \leq n$ such that the subsequence $\pi(i_1)\cdots\pi(i_k)$ is
order-isomorphic to~$p$; otherwise $\pi$ \emph{avoids}~$p$.
For background on permutation patterns and permutation classes, see
Vatter~\cite{vatter-permutation-classes}.
Throughout the paper we study the following normal-play impartial game.

\begin{definition}
  Fix $k \geq 1$.  In the game \emph{Permutations Avoiding Patterns}, or \emph{PAP}, \emph{with pattern length $k$}, a position is a subset
  $X \subseteq \Sym_n$.  A move consists of choosing a pattern $p \in \Sym_k$ that is contained
  in at least one permutation of $X$, and replacing $X$ by the subset of permutations in $X$ that
  avoid $p$.
\end{definition}

Because a chosen length-$k$ pattern remains forbidden for the rest of the game and there are only
$k!$ such patterns, every play terminates after at most $k!$ moves.

The game can equally well be played with more general notions of pattern, such as the mesh
patterns of Br\"and\'en and Claesson~\cite{branden-claesson}, but we restrict attention to
classical patterns throughout.  Related sequence and permutation games appear already in the
monotonic sequence games of Harary, Sagan, and West~\cite{harary-sagan-west} and the later work
of Albert et al.~\cite{albert-monotonic}.  More recently, Pudwell~\cite{pudwell-erdos-szekeres}
studied a different two-player permutation game based on the Erd\H{o}s--Szekeres theorem, where the
players build a permutation one entry at a time and try to avoid long monotone subsequences.
Combinatorial games on permutations were also studied by Parton~\cite{parton}, who introduced
PermuNim, an impartial permutation-avoidance game played on an $m\times n$ board by placing points
in unused rows and columns while avoiding a fixed pattern.  Thus Parton's positions are partial
permutation matrices on a board, and the geometry of the board itself plays a central role, rather
than subsets of a fixed symmetric group~$\Sym_n$.

We are mainly interested in the starting position $X = \Sym_n$.  If a set
$F \subseteq \Sym_k$ of patterns has already been chosen, then the current position is
\[
  \Av_n(F)=\{\pi \in \Sym_n : \pi \text{ avoids every pattern in } F\}.
\]
So for fixed $k$ the game may be viewed as a game on subsets of $\Sym_k$.

We will show that for $k=3$, a simple \emph{reverse strategy} already wins for Player~II (the second
player).  For $k=4$, and more generally for large-$n$ reverse-reply questions, the key structure is
a unique minimal monotone-forcing set $B_k \subseteq \Sym_k$ of patterns, which we define below.

From the game-theoretic point of view, the main results are these.  For $k=3$, the reverse reply
wins for every $n \geq 3$.  For $k=4$, the complete picture is that the move-by-move reverse
strategy works at $n=4$, fails at $n=5,6,7,8,9$, and then works again for all $n \geq 10$.  The
failure at small $n$ splits into two different phenomena: at $n=6$ the starting position itself is
an N-position, whereas at $n=5,7,8,9$ the starting position is still a P-position but the
reverse move need not be the correct winning reply.  The $k=4$ theorem for $n \geq 10$ is proved by
combining the structural $B_k$ analysis with a finite verification.

The paper therefore has two intertwined themes.  Sections~3--5 and~7 develop the structural theory
of the sets $B_k$: a recursive characterization, minimality, upper and lower bounds for the
monotone-forcing threshold, and threshold data for small~$k$.  Sections~2 and~6 are the
game-theoretic side: Section~2 analyzes the small starting positions, while Section~6 uses the
structure of $B_k$ to prove the large-$n$ reverse-reply theorem for $k=4$.  The role of $B_k$ in
the game is precisely that it supplies both the monotone endgame and the near-monotone witness
families that make most reverse replies uniform.

For general $k$, the paper goes only as far as a conjectural extension of this picture.  The
discussion beyond $k=4$ is intended to isolate the next structural obstruction, not to claim a
fully scalable theorem.

Several results in the paper are computer-assisted.  Those computations are all exhaustive
and, with the exception of the separate $N_6$ threshold computation, are reproduced by the scripts
collected in the public repository~\cite{game-patterns-repo}, specifically in the directory
\texttt{code-permuta/}; see Section~\ref{sec:repro}.

\section{Basic PAP analysis}

\begin{definition}
  Fix $k \geq 1$.  For a PAP position $X \subseteq \Sym_n$, the set of \emph{followers}
  $\mathcal{F}(X)$ is the set of all positions reachable from~$X$ in one move.  The
  \emph{Sprague--Grundy value}~\cite{sprague,grundy} of $X$ is defined recursively by
  \[
    \sg(X)=\mex\{\sg(Y):Y\in\mathcal{F}(X)\},
  \]
  where $\mex(S)$ denotes the minimal excludant, that is, the least non-negative integer not in~$S$.  A terminal position (one with
  $\mathcal{F}(X)=\varnothing$) has $\sg(X)=0$.  Under normal play, Player~I (the next mover) wins
  from~$X$ if and only if $\sg(X)\neq 0$ and the position is called an \emph{N-position}.
  Otherwise, when $\sg(X)=0$, it is called a \emph{P-position} (previous player wins).
\end{definition}

For the starting position with the single allowed pattern length $k$, we write $\sg(\Sym_n,k)$ for
the Sprague--Grundy value of~$\Sym_n$.

\begin{proposition}
  For every $n \geq 1$, $\sg(\Sym_n,1)=1$.
\end{proposition}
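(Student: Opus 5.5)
The plan is to compute the game tree directly, because for $k=1$ it is trivial. The only pattern in $\Sym_1$ is the single permutation $1$, and every permutation of length $n\geq 1$ contains it (take any single index). So from the starting position $X=\Sym_n$, which is nonempty since $n\geq 1$, the pattern $1$ is a legal move: it is contained in at least one permutation of $X$.

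I will first determine the followers of $\Sym_n$. The only move replaces $X$ by $\Av_n(\{1\})$. Since every permutation of length $n\geq1$ contains $1$, this is the empty set. Hence $\mathcal{F}(\Sym_n)=\{\varnothing\}$.

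Next I check that the empty position is terminal. From $\varnothing$ no pattern is contained in any permutation of the position, so there is no legal move, and $\sg(\varnothing)=0$ by the convention in the definition of the Sprague--Grundy value.

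Finally, $\sg(\Sym_n,1)=\mex\{0\}=1$, as claimed. There is no real obstacle here. The only point needing care is that $n\geq 1$ guarantees $\Sym_n$ is nonempty, so the move is available. For $n=0$ the statement would fail, which is why the hypothesis appears.
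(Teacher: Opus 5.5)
Your proof is correct and matches the paper's, which simply observes that the only legal move is the pattern $1$ and that it removes every permutation; you spell out the resulting computation $\sg(\Sym_n,1)=\mex\{\sg(\varnothing)\}=\mex\{0\}=1$. (A minor aside on your last remark: for $n=0$ the real reason the move is unavailable is that the empty permutation does not contain $1$, not that $\Sym_0$ is empty.)
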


\begin{proof}
  The only legal move is to choose the pattern $1$, which removes every permutation.
\end{proof}

\begin{proposition}
  For every $n \geq 1$, $\sg(\Sym_n,2)=0$.
\end{proposition}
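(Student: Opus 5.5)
We compute the game tree directly from the starting position $\Sym_n$ with $k=2$. There are only two patterns, $12$ and $21$, so every play lasts at most two moves, and we only need to determine which positions occur and their values.

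Two small cases need separating. For $n=1$, no permutation of length~$1$ contains a pattern of length~$2$. So $\Sym_1$ has no followers, is terminal, and has $\sg=0$. From now on assume $n\geq 2$.

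In $\Sym_n$, the identity $12\cdots n$ contains $12$ and the decreasing permutation $n\cdots 21$ contains $21$, so both moves are legal. By symmetry under reverse (or complement), which swaps $12$ and $21$, it suffices to analyse the move $12$ and then transfer the conclusion.

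After choosing $12$, the position is $\Av_n(12)=\{n\cdots 21\}$, since a permutation avoiding $12$ must be decreasing. This single permutation contains $21$ because $n\geq 2$, so the only move is $21$. That move leads to $\Av_n(12,21)=\varnothing$, which is terminal with value $0$. Hence $\sg(\Av_n(12))=\mex\{0\}=1$, and likewise $\sg(\Av_n(21))=1$. Therefore
\[
\sg(\Sym_n,2)=\mex\{1,1\}=0.
\]
In game terms, this is simply the reverse-reply strategy: Player~II answers $p$ with $p^r$, and then no legal move remains.

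The only point needing care, and it is minor, is confirming that the reply is always legal, namely that the unique remaining permutation contains the other pattern. This holds exactly when $n\geq 2$, and the case $n=1$ was handled separately above.
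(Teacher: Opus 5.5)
Your proposal is correct and follows the paper's proof essentially step for step. The case $n=1$ is terminal. For $n\geq 2$, each opening move leaves a single monotone permutation whose only legal move empties the position, so every follower has value $1$ and $\sg(\Sym_n,2)=\mex\{1\}=0$.
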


\begin{proof}
  If $n=1$, there are no legal moves, so $\sg(\Sym_1,2)=0$.  Now assume $n \geq 2$.
  The only length-$2$ patterns are $12$ and $21$.  Choosing $12$ leaves $\Av_n(\{12\})=\{\id_n^r\}$,
  from which the only legal move is~$21$, after which the position is empty. The case of first choosing
  $21$ is symmetric.  Hence every follower
  of $\Sym_n$ has Sprague--Grundy value~$1$, and $\sg(\Sym_n,2)=\mex\{1\}=0$.
\end{proof}

By the proposition above, for $k=2$ and $n \geq 2$, Player~II wins by choosing the reverse of
Player~I's move.  The same reverse-reply idea already settles the case $k=3$.

\begin{lemma}\label{lem:k3-avoidance}
  For $n \geq 4$ we have
  \begin{align*}
    \Av_n(123,321,132,231) & = \varnothing,       \\
    \Av_n(123,321,213,312) & = \varnothing,       \\
    \Av_n(132,231,213,312) & = \{\id_n,\id_n^r\}.
  \end{align*}
\end{lemma}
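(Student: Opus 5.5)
We treat the three identities separately, using only elementary arguments on short subsequences. Since $n \geq 4$, it suffices in the first two cases to show that no permutation of length $4$ avoids the given four patterns. Avoidance of a set of patterns is inherited by patterns: if some $\pi\in\Sym_n$ avoided them, then so would every length-$4$ subsequence of $\pi$. For the third identity we argue directly on arbitrary $n$.

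For the first identity, observe that the four patterns $123,321,132,231$ are exactly the length-$3$ patterns whose first entry is not the middle value of the three. So avoiding them means every triple $\pi(a)\pi(b)\pi(c)$ with $a<b<c$ has $\pi(a)$ as its middle value, i.e.\ the pattern is $213$ or $312$. Now take any four positions $a<b<c<d$ and apply this to the triples $(a,b,c)$, $(a,b,d)$ and $(a,c,d)$. Then $\pi(b),\pi(c),\pi(d)$ are each on one side of $\pi(a)$. The triple $(a,b,c)$ forces $\pi(b)$ and $\pi(c)$ onto opposite sides of $\pi(a)$, and the triple $(a,b,d)$ forces $\pi(b)$ and $\pi(d)$ onto opposite sides. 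Hence $\pi(c)$ and $\pi(d)$ are on the same side, contradicting the requirement from $(a,c,d)$. The second identity follows by applying reversal, which maps the set $\{123,321,132,231\}$ to $\{321,123,231,132\}$, the same set. That does not give what we need, so we use the symmetric observation instead. The patterns $123,321,213,312$ are exactly those whose last entry is not the middle value. The same three-triple argument with the roles of first and last positions swapped (triples $(b,c,d)$, $(a,c,d)$, $(a,b,d)$) gives the contradiction. Equivalently, this is the first case under reversal composed with the identification that reversal swaps "first" and "last".

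For the third identity, the patterns $132,231,213,312$ are exactly those in which the middle position does not carry the middle value. So avoiders are permutations in which every triple is monotone, i.e.\ the permutation avoids both $21$ and $12$ mixed in any triple. If $\pi$ is not monotone, then for $n\geq 3$ some three consecutive entries $\pi(i),\pi(i+1),\pi(i+2)$ are non-monotone. Indeed, a permutation whose consecutive triples are all monotone has all consecutive steps in the same direction, so it is monotone. That non-monotone triple is a forbidden pattern. Conversely, $\id_n$ and $\id_n^r$ contain only $123$ and $321$, so both lie in the set.

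The only delicate point is the bookkeeping of sides in the first two cases; everything else is immediate. I would double-check the parity argument on an explicit example, for instance $2143$, to make sure the three chosen triples really conflict.
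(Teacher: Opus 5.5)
Your argument for the first two identities rests on a false description of the pattern sets, so it proves statements about different sets.

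In $\{123,321,132,231\}$, the pattern $231$ has first entry equal to the middle value. Meanwhile $312$, which is not in the set, has first entry equal to the maximum. The patterns whose first entry is not the middle value are $\{123,132,312,321\}$. Your own ``i.e.'' already shows the mismatch: the surviving triples are $213$ and $312$, and in $312$ the first entry is the largest. Consequently the step ``the triple $(a,b,c)$ forces $\pi(b)$ and $\pi(c)$ onto opposite sides of $\pi(a)$'' fails, because a triple of type $312$ puts both below $\pi(a)$. What your three-triple argument actually proves is $\Av_4(123,132,312,321)=\varnothing$, a different statement. The second case has the same defect: $312$ has last entry equal to the middle value, and the patterns whose last entry is not the middle value are $\{123,213,231,321\}$, not $\{123,321,213,312\}$.

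The correct invariant is the entry in the \emph{middle position}. The allowed triples $213$ and $312$ are exactly those whose middle entry is the smallest of the three. So for $a<b<c<d$, the triples $(a,b,c)$ and $(b,c,d)$ give $\pi(b)<\pi(c)$ and $\pi(c)<\pi(b)$, a contradiction. For the second set, the allowed triples $132,231$ have middle entry the largest, and the same two triples conflict. Alternatively: you correctly noticed that reversal fixes the first set, but complement maps it onto the second ($123\leftrightarrow 321$, $132\mapsto 312$, $231\mapsto 213$), so the second identity follows from the first.

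Your treatment of the third identity is correct as it stands. With this repair, your proof becomes a hand-checkable alternative to the paper's, which simply inspects all $24$ elements of $\Sym_4$. Your consecutive-triple argument also makes explicit a step the paper leaves tacit in the third identity: passing from ``every $4$-point subpattern is monotone'' to ``$\pi$ is monotone''.
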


\begin{proof}
  Every permutation of length at least $4$ contains a $4$-point subpattern order-isomorphic to some
  element of $\Sym_4$, so it is enough to check the $24$ permutations in $\Sym_4$.  A direct inspection shows that every element of $\Sym_4$
  contains at least one of $123,321,132,231$, and similarly at least one of $123,321,213,312$.
  Likewise, every element of $\Sym_4$ except $1234$ and $4321$ contains at least one of
  $132,231,213,312$.
\end{proof}

\begin{theorem}\label{thm:k3}
  For every $n \geq 3$, $\sg(\Sym_n,3)=0$.
\end{theorem}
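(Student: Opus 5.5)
The plan is to show that Player~II wins from $\Sym_n$ by the \emph{reverse-reply strategy}: whenever Player~I chooses $p\in\Sym_3$, Player~II answers with $p^r$. For $k=3$ no pattern equals its own reverse. So $\Sym_3$ splits into the three reverse pairs $\{123,321\}$, $\{132,231\}$, $\{213,312\}$. Player~II's strategy then keeps the following invariant: after each of her moves, the set $F$ of chosen patterns is a union of such pairs. Consequently the position $\Av_n(F)$ is closed under reversal. Since play lasts at most $6$ moves, the only thing to verify is that each reply $p^r$ is legal. That is, if Player~I's move $p$ is legal from $\Av_n(F)$, then some permutation in $\Av_n(F\cup\{p\})$ contains $p^r$. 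If this always holds, Player~II always has a move and Player~I eventually runs out, so $\sg(\Sym_n,3)=0$.

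I would dispose of $n=3$ first, where it is trivial. A position $\Av_3(F)$ is simply $\Sym_3\setminus F$, and a pattern $p$ is legal exactly when $p\notin F$. If Player~I picks $p$, then $p^r\neq p$ is still unchosen, and the permutation $p^r$ itself lies in $\Av_3(F\cup\{p\})$. Hence the reply is always legal.

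For $n\ge4$ I would organize the argument by which pair Player~I breaks first.
\begin{itemize}
\item If Player~I opens with $123$ or $321$, he is the first to break the pair $\{123,321\}$. The reply is legal because $\id_n^r$ avoids $123$ but contains $321$, and symmetrically. After that move:
  \begin{itemize}
  \item For $n\ge5$, Erd\H{o}s--Szekeres gives $\Av_n(123,321)=\varnothing$, so Player~I has no move.
  \item For $n=4$, $\Av_4(123,321)=\{2143,3412,2413,3142\}$. I would check by hand that the next pair Player~I breaks also admits the reverse reply, after which Lemma~\ref{lem:k3-avoidance} shows the position is empty.
  \end{itemize}
\item If Player~I opens with $132$ or $231$, the reply is legal since $\id_n$ avoids both. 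Now $\Av_n(132,231)$ consists of the permutations whose maximum sits at an end, recursively.
  \begin{itemize}
  \item If Player~I next picks $123$ (resp.\ $321$), the witness $\id_n^r$ (resp.\ $\id_n$) avoids $132$, $231$ and the chosen pattern, and contains the reverse. Lemma~\ref{lem:k3-avoidance} then makes the position empty.
  \item If Player~I next picks $213$, the witness $n\,1\,2\cdots(n-1)$ avoids $132$, $231$ and $213$, and contains $312$. Symmetrically, $1\,n\,(n-1)\cdots2$ serves if he picks $312$. By Lemma~\ref{lem:k3-avoidance} the position becomes $\{\id_n,\id_n^r\}$. From there Player~I picks $123$ or $321$, which kills exactly one of the two, and the reverse reply kills the other.
  \end{itemize}
\item If Player~I opens with $213$ or $312$, the case follows from the previous one by applying complement. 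Complement maps the pair $\{132,231\}$ to $\{312,213\}$ and fixes the pair $\{123,321\}$ as a set. It also commutes with reversal, so the reverse-reply strategy is mapped to itself.
\end{itemize}

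The main obstacle is bookkeeping at small $n$, specifically $n=4$ after $\{123,321\}$ has been chosen. There the position $\Av_4(123,321)$ is nonempty, and one has to confirm by a short inspection of these four permutations that every legal $p$ leaves a witness containing $p^r$. The most delicate subcase is when Player~I's pattern removes the only permutations containing $p^r$. I would settle it by listing, for each of the four pattern choices, which of $2143,3412,2413,3142$ survive.
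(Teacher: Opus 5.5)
Your plan is the paper's proof: reverse replies, Erd\H{o}s--Szekeres for $n\ge 5$ once $\{123,321\}$ is forbidden, Lemma~\ref{lem:k3-avoidance} to finish, and symmetry for the other openings. You go further than the paper by exhibiting a witness for each reply, where the paper only asserts legality. However, two of your witnesses in the $132$/$231$ branch are wrong.

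\emph{First reply.} Legality of the reply $231$ after $132$ needs a permutation that avoids $132$ and \emph{contains} $231$. The identity $\id_n$ contains no $231$, so ``$\id_n$ avoids both'' does not show the reply is legal. Use $2\,3\cdots n\,1$ instead, whose $3$-shadow is $\{123,231\}$. When the opening is $231$, use its reverse $1\,n\,(n-1)\cdots 2$.

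\emph{The $312$ subcase.} In $\Av_n(132,231)$, your witness $1\,n\,(n-1)\cdots 2$ for the reply $213$ to $312$ fails outright. Its $3$-shadow is $\{132,321\}$, so it contains the forbidden $132$ and contains no $213$. You transported the $213$-case witness by complement. But complement sends $\{132,231\}$ to $\{312,213\}$, so it does not preserve the position. The symmetry that fixes $\{132,231\}$ and swaps $213\leftrightarrow 312$ is reversal. Reversing $n\,1\,2\cdots(n-1)$ gives $(n-1)(n-2)\cdots 1\,n$, whose $3$-shadow is $\{321,213\}$, and this witness works.

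The $n=4$ check you deferred is quick. The $3$-shadows of $2143$ and $3412$ are $\{132,213\}$ and $\{231,312\}$, while $2413$ and $3142$ contain all four non-monotone patterns. Whatever non-monotone $p$ Player~I picks, exactly one of $2143,3412$ survives, and it contains $p^r$. So the delicate subcase you anticipated never arises. With these corrections your argument is complete and agrees with the paper's.
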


\begin{proof}
  The statement is immediate for $n=3$, so assume $n \geq 4$.
  Player II replies to every move by choosing the reverse pattern.

  If Player I chooses $123$, Player II chooses $321$. If $n \geq 5$ then Player II has already won, since by the
  Erd\H{o}s--Szekeres theorem~\cite{erdos-szekeres} every permutation of length at least $5$ contains either $123$ or $321$.  If $n=4$, a move is possible, and
  by Lemma~\ref{lem:k3-avoidance}, any third move
  must be one of $132,213,231,312$, and after that Player II can again choose the reverse and finish
  the game.

  If Player I chooses $132$, Player II chooses $231$.  If Player I next chooses $123$ or $321$, then
  Player II chooses the reverse and empties the set by Lemma~\ref{lem:k3-avoidance}.  If Player I
  chooses $213$ or $312$, then Player II again chooses the reverse, and the position becomes
  $\{\id_n,\id_n^r\}$.  In that position the only legal moves are $123$ and $321$, so it is again a
  P-position.  The cases $231$, $213$, $312$, and $321$ follow by symmetry.
\end{proof}

\begin{remark}
  The computer-assisted results in Sections~2,~6, and~7 are reproduced by the focused scripts
  in the public repository~\cite{game-patterns-repo}, namely \texttt{code-permuta/}, except for the
  separate $N_6$ threshold computation; see
  Section~\ref{sec:repro}.
\end{remark}

In the next few pages, the Sprague--Grundy language is used mainly for two purposes: to record
small-$n$ data, and to separate the statement ``$\Sym_n$ is a P-position'' from the stronger
statement that the literal move-by-move reverse rule is a winning strategy. The later $k=4$ theorem for $n \geq 10$ is proved by the
reverse-reply criterion from Corollary~\ref{cor:conditional-reverse}, which establishes the stronger
strategy statement.

A natural question is whether the same reverse strategy continues to win for general~$k$.
Table~\ref{tab:sgvalues} records Sprague--Grundy values for several small
cases.  The entries for $k=1,2,3$ follow from the results above; the entries for $k=4$ were
computed by exhaustive search; see Section~\ref{sec:repro}.

\begin{table}[ht]
  \centering
  \begin{tabular}{ccccc}
    \toprule
    $n$ & $\sg(\Sym_n,1)$ & $\sg(\Sym_n,2)$ & $\sg(\Sym_n,3)$ & $\sg(\Sym_n,4)$ \\
    \midrule
    1   & 1               & 0               & 0               & 0               \\
    2   & 1               & 0               & 0               & 0               \\
    3   & 1               & 0               & 0               & 0               \\
    4   & 1               & 0               & 0               & 0               \\
    5   & 1               & 0               & 0               & 0               \\
    6   & 1               & 0               & 0               & 2               \\
    7   & 1               & 0               & 0               & 0               \\
    8   & 1               & 0               & 0               & 0               \\
    9   & 1               & 0               & 0               & 0               \\
    10  & 1               & 0               & 0               & 0               \\
    \bottomrule
  \end{tabular}
  \caption{Sprague--Grundy values for PAP on $\Sym_n$ up to $n=10$ and $k=4$.}
  \label{tab:sgvalues}
\end{table}

The picture becomes more nuanced already at $k=4$.  The exceptional value
\[
  \sg(\Sym_6,4)=2
\]
shows that the naive guess that $\Sym_n$ should always be a P-position for fixed $k$ is false.
A further computation explains this value: among the $24$ followers $\Av_6(p)$, twelve have
Sprague--Grundy value $0$, two have value $1$, and the remaining ten have value $3$; hence
\[
  \sg(\Sym_6,4)=\mex\{0,1,3\}=2.
\]
Moreover, even when the starting position is a P-position, the move-by-move reverse strategy
need not be a winning strategy.

\begin{proposition}\label{prop:k4-reverse-data}
  For $k=4$, tests from the starting position $\Sym_n$ show that the
  move-by-move reverse strategy works for $n=4$ and fails for $n=5,6,7,8,9$.
\end{proposition}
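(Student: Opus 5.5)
We would establish this by an exhaustive, computer-assisted check of the game tree on the subsets $\Av_n(F)$, $F\subseteq\Sym_4$. A position is determined by the forbidden set $F$ together with $n$, and we are free to replace $F$ by the set of all $p\in\Sym_4$ absent from every permutation of $\Av_n(F)$, since such patterns are no longer legal moves. The first step is therefore to compute, for each $n\in\{4,\dots,9\}$, the containment table recording which length-$4$ patterns occur in each $\pi\in\Sym_n$. This is stored as a $24$-bit mask per permutation, and positions are then represented as bitsets of permutation indices. With this table we compute $\sg$ of every reachable position by memoized recursion on the $24$-bit mask of chosen patterns. That is at most $2^{24}$ states, and in practice far fewer, since only reachable, canonicalized positions appear. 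We can also quotient by the symmetries reverse, complement and inverse, since they act compatibly on patterns and positions.

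Next we make precise what ``the reverse strategy works'' means. Player~II answers each move $p$ by $p^r$ whenever $p^r$ is still a legal move. If $p^r$ is no longer legal, or if $p=p^r$ (which cannot happen for $k\ge2$), we need a convention. The natural choice is that the strategy fails unless the resulting position after Player~II's reply is always a P-position. We then test the strategy by a second recursion over Player~I's choices. From a position $X$ with Player~I to move, the strategy wins if, for every legal $p$, the pattern $p^r$ is legal in $\Av(X,p)$ and the strategy wins from $\Av(X,p,p^r)$, or else if $\Av(X,p)$ is terminal. This search is much cheaper than the full Grundy computation, because only Player~I branches.

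For $n=4$ we expect the check to succeed. Here each $\Av_4(F)$ simply removes the permutations equal to a chosen pattern, so the game is taking elements of $\Sym_4$ in pairs $\{p,p^r\}$. The reverse reply is always legal and leaves an even number of moves, which gives a short hand proof. For $n=5,\dots,9$ we need only exhibit, for each $n$, one explicit sequence of Player~I moves along which the reverse reply either is illegal or lands in an N-position; the Grundy table then certifies that N-position. For $n=6$ this is immediate from $\sg(\Sym_6,4)=2$: the twelve followers with Grundy value $0$ give Player~I a winning first move, so no Player~II strategy can win. For $n=5,7,8,9$ we would record the short failing lines the search produces. Each certificate can then be checked independently, by recomputing the relevant $\Av_n(F)$ and the Grundy values of the small follower sets.

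The main obstacle is computational scale at $n=8,9$, where $|\Sym_9|=362880$. Full Sprague--Grundy evaluation over all subsets $F$ may be heavy. We would first try to avoid full evaluation: we only need the Grundy value of specific positions reached after a few moves, and these positions are small avoidance classes. Alternatively, we could find a failing line in which the reverse reply becomes illegal, which needs no Grundy values at all. The reproducibility scripts in \texttt{code-permuta/} would carry out these checks.
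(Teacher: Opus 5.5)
Your overall route matches the paper's. The proposition rests entirely on an exhaustive search of the reverse-strategy game tree. Your hand argument at $n=4$ is sound: every play lasts exactly $24$ moves, so Player~II always moves last. Your deduction of failure at $n=6$ from $\sg(\Sym_6,4)=2$ is also sound. However, the recursive test you would actually run contains a concrete error. You count a Player~I move $p$ as a success for the strategy when $\Av(X,p)$ is terminal. Under normal play this means Player~II has no move and loses, so it is a failure of the strategy. The correct test has no such clause: the strategy wins from $X$ if and only if, for every legal $p$, the pattern $p^r$ is legal in $\Av(X,p)$ and the strategy wins from $\Av(X,p,p^r)$. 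The base case is a position with no legal moves, where the condition holds vacuously.

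This is not a harmless corner case. Positions reached after Player~II's reverse replies are reverse-closed, so a losing line for Player~II ends in one of two ways. Either Player~I chooses a pattern contained in every survivor, which empties the position. Or $p$ and $p^r$ occur in exactly the same survivors, so $p^r$ is illegal. The first mode is exactly what happens in the paper's $n=9$ example. After $1234,4321,1324,4231$ the position is $\{\pi,\pi^r\}$, and any move leaving a single survivor has value $1$. So the value $2$ forces a move to value $0$, and that can only be the move to $\varnothing$ by a pattern common to $\pi$ and $\pi^r$. Your recursion would score that line as a win for Player~II. A search built on it could therefore report success for values of $n$ where the strategy in fact fails.

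The fix is simple. Remove the clause, or rely only on the failure certificates you describe, counting an emptied position as an illegal reverse reply. With either change your plan coincides with the paper's computation. One further caution if you reuse your symmetry reduction in this second search. Inverse does not commute with reversal, since $(p^r)^{-1}=(p^{-1})^c$, so there only reverse and complement may be quotiented out. All three symmetries remain fine for the Grundy computation.
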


\begin{proof}
  This follows from an exhaustive computer search of the reverse-strategy game tree for
  $4 \leq n \leq 9$; see Section~\ref{sec:repro}.
\end{proof}

Example~\ref{ex:n9-reverse-fails} illustrates how the reverse strategy can fail.

\begin{example}\label{ex:n9-reverse-fails}
  At $n=9$ the reverse reply can already fail on the very next move, even though it remains legal.
  Starting from $\Sym_9$, suppose Player~I chooses $1234$ and Player~II replies with the reverse
  $4321$.  Then Player~I can choose $1324$.  The reverse response $4231$ is still legal, since
  a count gives $334$ avoiders before $4231$ is forbidden and $2$ afterwards,
  yet it lands in an N-position:
  \[
    \sg(\{1234,4321,1324,4231\})=2.
  \]
  In fact, the winning replies after $1324$ are $3412$, $3421$, and $4312$.
\end{example}

Section~6 shows that what happened in Example~\ref{ex:n9-reverse-fails} disappears once
certain witness-families stabilize (from $n \geq 9$ for non-monotone replies) and once the monotone endgame is available (from
$n \geq 10$).

Exact computation also reveals a rigid pattern in the lengths of optimal play when $k=4$.
Figure~\ref{fig:k4-optimal-lengths} shows the distributions of complete play lengths under
optimal play from $\Sym_n$ for $5 \leq n \leq 10$, with bar heights on a logarithmic scale.  The
supports of these distributions are particularly simple: at $n=5$ they are the even lengths
$10,12,\ldots,24$; at $n=6$ they are the odd lengths $7,9,\ldots,23$; at $n=7,8,9$ they are the
even lengths $4,6,\ldots,24$; and at $n=10$ they are the even lengths $2,4,\ldots,24$.  Thus
$n=6$ is exceptional not only in its Sprague--Grundy value but also in the parity pattern of
optimal game lengths.

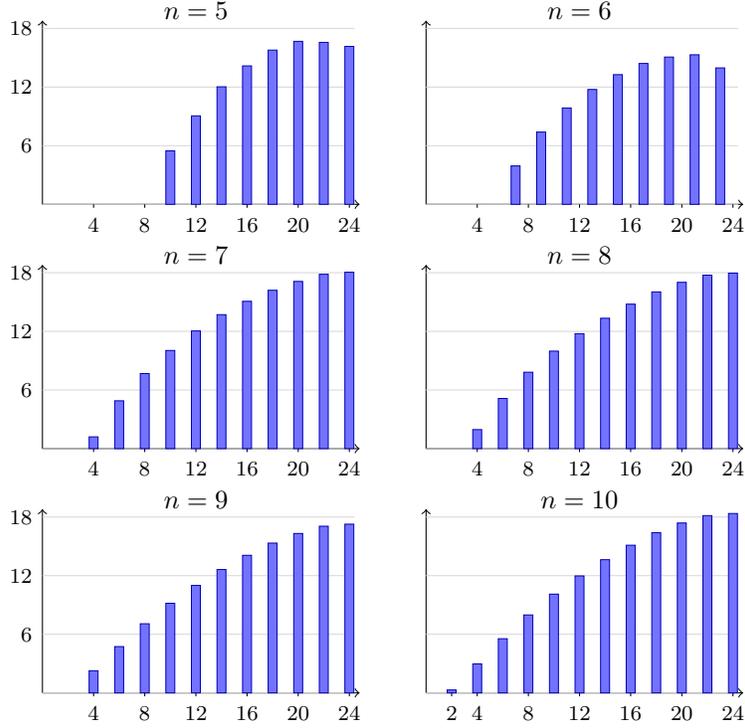
\begin{figure}[ht]
  \centering
  \begin{tikzpicture}[x=0.17cm,y=0.13cm]
    \tikzstyle{optbar}=[fill=blue!55, draw=blue!70!black, line width=0.2pt]


    \begin{scope}[shift={(0,50)}]
      \draw[->] (0,0) -- (24.8,0);
      \draw[->] (0,0) -- (0,18.8);
      \foreach \y in {0,6,12,18} {\draw[gray!30] (0,\y) -- (24.4,\y);}
      \foreach \y in {6,12,18} {\node[left,font=\scriptsize] at (0,\y) {\y};}
      \foreach \x in {4,8,12,16,20,24} {\draw (\x,0) -- (\x,-0.35) node[below,font=\scriptsize] {\x};}
      \foreach \x/\h in {10/5.473,12/9.055,14/12.027,16/14.168,18/15.783,20/16.680,22/16.574,24/16.162}
        {\filldraw[optbar] (\x-0.35,0) rectangle (\x+0.35,\h);}
      \node[font=\small] at (12,19.8) {$n=5$};
    \end{scope}

    \begin{scope}[shift={(30,50)}]
      \draw[->] (0,0) -- (24.8,0);
      \draw[->] (0,0) -- (0,18.8);
      \foreach \y in {0,6,12,18} {\draw[gray!30] (0,\y) -- (24.4,\y);}
      \foreach \x in {4,8,12,16,20,24} {\draw (\x,0) -- (\x,-0.35) node[below,font=\scriptsize] {\x};}
      \foreach \x/\h in {7/3.932,9/7.396,11/9.861,13/11.755,15/13.271,17/14.416,19/15.072,21/15.303,23/13.961}
        {\filldraw[optbar] (\x-0.35,0) rectangle (\x+0.35,\h);}
      \node[font=\small] at (12,19.8) {$n=6$};
    \end{scope}

    \begin{scope}[shift={(0,25)}]
      \draw[->] (0,0) -- (24.8,0);
      \draw[->] (0,0) -- (0,18.8);
      \foreach \y in {0,6,12,18} {\draw[gray!30] (0,\y) -- (24.4,\y);}
      \foreach \y in {6,12,18} {\node[left,font=\scriptsize] at (0,\y) {\y};}
      \foreach \x in {4,8,12,16,20,24} {\draw (\x,0) -- (\x,-0.35) node[below,font=\scriptsize] {\x};}
      \foreach \x/\h in {4/1.204,6/4.893,8/7.681,10/10.043,12/12.042,14/13.709,16/15.077,18/16.205,20/17.110,22/17.840,24/18.051}
        {\filldraw[optbar] (\x-0.35,0) rectangle (\x+0.35,\h);}
      \node[font=\small] at (12,19.8) {$n=7$};
    \end{scope}

    \begin{scope}[shift={(30,25)}]
      \draw[->] (0,0) -- (24.8,0);
      \draw[->] (0,0) -- (0,18.8);
      \foreach \y in {0,6,12,18} {\draw[gray!30] (0,\y) -- (24.4,\y);}
      \foreach \x in {4,8,12,16,20,24} {\draw (\x,0) -- (\x,-0.35) node[below,font=\scriptsize] {\x};}
      \foreach \x/\h in {4/1.944,6/5.132,8/7.817,10/9.976,12/11.763,14/13.342,16/14.784,18/16.037,20/17.017,22/17.755,24/17.960}
        {\filldraw[optbar] (\x-0.35,0) rectangle (\x+0.35,\h);}
      \node[font=\small] at (12,19.8) {$n=8$};
    \end{scope}

    \begin{scope}[shift={(0,0)}]
      \draw[->] (0,0) -- (24.8,0); 
      \draw[->] (0,0) -- (0,18.8);
      \foreach \y in {0,6,12,18} {\draw[gray!30] (0,\y) -- (24.4,\y);}
      \foreach \y in {6,12,18} {\node[left,font=\scriptsize] at (0,\y) {\y};}
      \foreach \x in {4,8,12,16,20,24} {\draw (\x,0) -- (\x,-0.35) node[below,font=\scriptsize] {\x};}
      \foreach \x/\h in {4/2.265,6/4.740,8/7.078,10/9.170,12/10.997,14/12.626,16/14.081,18/15.332,20/16.314,22/17.059,24/17.269}
        {\filldraw[optbar] (\x-0.35,0) rectangle (\x+0.35,\h);}
      \node[font=\small] at (12,19.8) {$n=9$};
    \end{scope}

    \begin{scope}[shift={(30,0)}]
      \draw[->] (0,0) -- (24.8,0); 
      \draw[->] (0,0) -- (0,18.8);
      \foreach \y in {0,6,12,18} {\draw[gray!30] (0,\y) -- (24.4,\y);}
      \foreach \x in {4,8,12,16,20,24} {\draw (\x,0) -- (\x,-0.35) node[below,font=\scriptsize] {\x};}
      \draw (2,0) -- (2,-0.35) node[below,font=\scriptsize] {2};
      \foreach \x/\h in {2/0.301,4/2.966,6/5.545,8/7.980,10/10.109,12/11.972,14/13.633,16/15.114,18/16.392,20/17.390,22/18.145,24/18.359}
        {\filldraw[optbar] (\x-0.35,0) rectangle (\x+0.35,\h);}
      \node[font=\small] at (12,19.8) {$n=10$};
    \end{scope}
  \end{tikzpicture}
  \caption{Distributions of complete play lengths under optimal play for PAP on $\Sym_n$ with
    pattern length $4$, for $5 \leq n \leq 10$.  Bar heights are $\log_{10}$ of the number of optimal
    play lines of the given length.}
  \label{fig:k4-optimal-lengths}
\end{figure}

These computations suggest that the reverse strategy does not hold uniformly in~$n$, but may still
govern the large-$n$ game.  We will prove below that for $k=4$ the reverse strategy does indeed
succeed for every $n \geq 10$.

We believe that the same phenomenon holds for general $k$: there are small-$n$ obstructions to the reverse strategy,
but for large enough $n$ it is a winning strategy for Player~II.  We therefore make the following conjecture.

\begin{conjecture}\label{conj:general-reverse}
  For every $k \geq 3$, there exists $n_0(k)$ such that the reverse strategy is a winning strategy
  for Player~II on $\Sym_n$ for all $n \geq n_0(k)$.  In particular, $\sg(\Sym_n,k)=0$ for all such
  $n$.
\end{conjecture}

To build the theoretical foundation for that later $k=4$ theorem, we first isolate the monotone
endgame and then formulate the reverse-reply criterion that we will actually use.

\begin{definition}
  We say that a set $F \subseteq \Sym_k$ is \emph{reverse-closed} if $p \in F$ if and only if
  $p^r \in F$.  For a fixed length $n$, a pattern $p$ is \emph{legal} from~$F$ at length
  $n$ if some permutation in $\Av_n(F)$ contains~$p$.  When the length $n$ is clear from the
  context, we simply say that $p$ is \emph{legal} from~$F$.
\end{definition}

\begin{lemma}\label{lem:reverse-closure-symmetry}
  If $F \subseteq \Sym_k$ is reverse-closed, then $\Av_n(F)$ is reverse-closed: $\pi \in \Av_n(F)$
  if and only if $\pi^r \in \Av_n(F)$.  In particular, a pattern $p$ is legal from $F$ if and only
  if $p^r$ is legal from~$F$.
\end{lemma}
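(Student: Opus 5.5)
The plan is to reduce everything to one elementary fact: reversal commutes with pattern containment. Precisely, for $\pi \in \Sym_n$ and $p \in \Sym_k$, $\pi$ contains $p$ if and only if $\pi^r$ contains $p^r$.

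I will prove this fact first. Suppose indices $i_1<\cdots<i_k$ give a subsequence $\pi(i_1)\cdots\pi(i_k)$ order-isomorphic to $p$. Set $j_t = n+1-i_{k+1-t}$, so that $j_1<\cdots<j_k$. Then $\pi^r(j_t)=\pi(i_{k+1-t})$. Hence the subsequence $\pi^r(j_1)\cdots\pi^r(j_k)$ is the original subsequence read backwards, and it is order-isomorphic to $p^r$. The converse follows by applying the same argument to $\pi^r$ and $p^r$, using $(\pi^r)^r=\pi$ and $(p^r)^r=p$.

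For the first claim, take $\pi \in \Av_n(F)$ and suppose $\pi^r$ contained some $q\in F$. By the fact, $\pi$ would then contain $q^r$. Since $F$ is reverse-closed, $q^r\in F$, which contradicts $\pi \in \Av_n(F)$. So $\pi^r\in\Av_n(F)$. The reverse implication follows by applying this to $\pi^r$, using $(\pi^r)^r=\pi$.

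For the legality claim, suppose $p$ is legal from $F$, witnessed by some $\pi\in\Av_n(F)$ that contains $p$. Then $\pi^r\in\Av_n(F)$ by the first part, and $\pi^r$ contains $p^r$ by the fact, so $p^r$ is legal. The converse follows by symmetry, since $(p^r)^r=p$.

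There is no real obstacle here. The only point needing care is the index bookkeeping in the basic fact: one must check that the reversed indices are increasing and that reading the subsequence backwards yields exactly the pattern $p^r$.
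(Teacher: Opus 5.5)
Your proposal is correct and follows the same approach as the paper, which proves the lemma in one line: $\pi$ avoids $F$ if and only if $\pi^r$ avoids $F^r=F$. You spell out the index bookkeeping behind that identity and the legality consequence, which the paper leaves implicit.
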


\begin{proof}
  A permutation $\pi$ avoids $F$ if and only if $\pi^r$ avoids~$F^r = F$.
\end{proof}

\begin{proposition}\label{prop:monotone-trap}
  Let $k \geq 2$, let $F \subseteq \Sym_k$ be reverse-closed, and suppose that $F$ contains neither
  monotone pattern.  Then both monotone patterns are legal from $F$.  If
  $n \geq (k-1)^2+1$, then after one monotone pattern is chosen, the other is legal, and after both
  monotone patterns are chosen no legal moves remain.
\end{proposition}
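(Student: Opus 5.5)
The plan uses the identity permutation and its reverse as witnesses, and then Erd\H{o}s--Szekeres for the final claim. Throughout, write $\iota_k = 12\cdots k$ and $\iota_k^r = k\cdots 21$ for the two monotone patterns of length $k$, and assume $n \geq k$ (otherwise no pattern of length $k$ is legal and the statement is vacuous).

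\textbf{Both monotone patterns are legal from $F$.} The only length-$k$ pattern contained in $\id_n$ is $\iota_k$. Since $\iota_k \notin F$, the permutation $\id_n$ avoids $F$, so $\id_n \in \Av_n(F)$, and it contains $\iota_k$. Hence $\iota_k$ is legal from $F$. By the same argument, $\id_n^r \in \Av_n(F)$ contains $\iota_k^r$, so $\iota_k^r$ is legal. Alternatively, the second claim follows from the first by Lemma~\ref{lem:reverse-closure-symmetry}.

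\textbf{After one monotone pattern is chosen, the other is still legal.} Suppose Player chooses $\iota_k$, giving $F' = F \cup \{\iota_k\}$. The permutation $\id_n^r$ contains only the length-$k$ pattern $\iota_k^r$. Because $\iota_k^r \notin F$ and $\iota_k^r \neq \iota_k$ (as $k \geq 2$), we have $\id_n^r \in \Av_n(F')$. It contains $\iota_k^r$, so that pattern remains legal. The case where $\iota_k^r$ is chosen first is symmetric, using $\id_n$ as the witness. Note that this step does not use the bound on $n$.

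\textbf{After both monotone patterns are chosen, no legal moves remain.} By the Erd\H{o}s--Szekeres theorem~\cite{erdos-szekeres}, every permutation of length at least $(k-1)^2+1$ contains an increasing or a decreasing subsequence of length $k$, that is, it contains $\iota_k$ or $\iota_k^r$. So when $n \geq (k-1)^2+1$, we get $\Av_n(F \cup \{\iota_k, \iota_k^r\}) = \varnothing$. An empty position has no legal moves, since a move requires a permutation in the position that contains the chosen pattern.

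There is no real obstacle here. The only points needing care are to observe that the identity permutation contains exactly one length-$k$ pattern, and to invoke Erd\H{o}s--Szekeres in its form with length-$k$ monotone subsequences.
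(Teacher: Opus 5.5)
Your proof is correct and follows the paper's argument: $\id_n$ and $\id_n^r$ witness legality both before and after one monotone pattern is forbidden, and Erd\H{o}s--Szekeres empties the position once both monotone patterns are forbidden. One small correction to your parenthetical: for $n<k$ the first claim is not vacuous but false, since no length-$k$ pattern is legal, so $n\geq k$ is a genuine implicit hypothesis (one the paper also assumes tacitly).
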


\begin{proof}
  Since $F$ contains neither monotone pattern, the increasing permutation $\id_n$ and the decreasing
  permutation $\id_n^r$ both lie in $\Av_n(F)$.  Hence $12\cdots k$ and $k\cdots 21$ are both legal
  from $F$.

  After $12\cdots k$ is chosen, the permutation $\id_n^r$ still lies in
  $\Av_n(F \cup \{12\cdots k\})$ and contains $k\cdots 21$, so the decreasing monotone pattern
  remains legal.  The other case is symmetric.

  If both monotone patterns have been chosen and $n \geq (k-1)^2+1$, then by the Erd\H{o}s--Szekeres
  theorem every permutation of length $n$ contains at least one of them, so no permutation survives.
\end{proof}

\begin{corollary}\label{cor:conditional-reverse}
  Fix $k \geq 2$ and $n \geq (k-1)^2+1$.  Suppose that whenever $F \subseteq \Sym_k$ is reverse-closed,
  contains neither monotone pattern, and $p \in \Sym_k \setminus F$ is a non-monotone pattern legal
  from $F$, the reverse pattern $p^r$ is legal from $F \cup \{p\}$.  Then the move-by-move reverse
  strategy is a winning strategy for Player~II on $\Sym_n$.  In particular, $\Sym_n$ is a P-position.
\end{corollary}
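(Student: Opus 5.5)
We prove Corollary~\ref{cor:conditional-reverse} by showing that Player~II can maintain an invariant: after each of her moves, the set $F$ of chosen patterns is reverse-closed. Every legal move then has a legal reverse reply, and since the game is finite, Player~I eventually faces a position with no legal moves and loses under normal play. At the start $F=\varnothing$, which is reverse-closed, so the invariant holds initially.

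For the inductive step, let $F$ be reverse-closed and let Player~I choose a pattern $p \notin F$ that is legal from $F$. If $p$ is a palindrome under reversal, that is $p^r = p$, this forces $k \le 1$, which is excluded. Indeed, $p(1)=p(k)$ is impossible for $k \geq 2$, so $p^r \neq p$. Since $F$ is reverse-closed and $p \notin F$, we also get $p^r \notin F$. We then split into three cases.

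\emph{Case 1: $F$ contains neither monotone pattern and $p$ is non-monotone.} The hypothesis of the corollary says directly that $p^r$ is legal from $F\cup\{p\}$. Player~II plays it, and $F\cup\{p,p^r\}$ is reverse-closed.

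\emph{Case 2: $F$ contains neither monotone pattern and $p$ is monotone.} Proposition~\ref{prop:monotone-trap} applies, using $n \ge (k-1)^2+1$. It shows that after one monotone pattern is chosen, the other, namely $p^r$, is still legal. Player~II plays $p^r$, and the new set is again reverse-closed. The same proposition also shows that no legal moves remain, so Player~I loses immediately.

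\emph{Case 3: $F$ contains a monotone pattern.} By reverse-closure $F$ then contains both monotone patterns. By Erd\H{o}s--Szekeres, $\Av_n(F)=\varnothing$, so no legal move $p$ exists and this case cannot arise.

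This completes the induction. Play always ends with Player~I unable to move, so the reverse strategy wins and $\Sym_n$ is a P-position. The only delicate points are the bookkeeping facts that $p^r \neq p$ and $p^r\notin F$, which ensure the reply is a genuinely new legal move, and the observation that the monotone case is fully handled by Proposition~\ref{prop:monotone-trap}. I do not expect any real obstacle, since the substantive content lives in the hypothesis of the corollary.
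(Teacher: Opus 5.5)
Your proof is correct and follows the paper's argument. Player~II keeps the forbidden set reverse-closed after each of her moves, invoking the hypothesis for non-monotone moves and Proposition~\ref{prop:monotone-trap} for monotone ones. Your explicit checks that $p^r\neq p$, that $p^r\notin F$, and that a reverse-closed $F$ containing a monotone pattern already has $\Av_n(F)=\varnothing$ are details the paper leaves implicit.
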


\begin{proof}
  Starting from the empty forbidden set, Player~II always replies to Player~I by choosing the reverse
  pattern.  At each stage, let $F$ denote the current forbidden set after Player~II's previous move;
  then $F$ is reverse-closed.

  If Player~I chooses a non-monotone pattern while $F$ contains neither monotone pattern, then the
  hypothesis gives a legal reply $p^r$.

  If Player~I chooses a monotone pattern, then Proposition~\ref{prop:monotone-trap} shows that the
  other monotone pattern is legal, and that after Player~II replies there are no legal moves left.

  Thus Player~II can answer every move of Player~I by the reverse pattern, and the game ends on
  Player~II's move.  Hence the move-by-move reverse strategy is winning, and in particular
  $\Sym_n$ is a P-position.
\end{proof}

\section{Definitions and the sets \texorpdfstring{$B_k$}{Bk}}

The next three sections supply the structural input for the game result, Theorem~\ref{thm:k4}, proved in Section~6.
This section introduces the sets $B_k$ and their recursive description, Section~4 proves their
minimality via witness families, and Section~5 develops the monotone-forcing bounds on the
threshold~$N_k$.

\begin{definition}
  For $k \geq 3$, define
  \begin{align*}
    p_k & = 12\cdots (k-2)\,k\,(k-1), \\
    q_k & = 1\,k\,(k-1)\cdots 2,      \\
    r_k & = 21\,3\,4\cdots k,         \\
    s_k & = 23\cdots k\,1.
  \end{align*}
  We set
  \[
    B_k=\{p_k,q_k,r_k,s_k,p_k^c,q_k^c,r_k^c,s_k^c\}.
  \]
\end{definition}

Thus
\begin{align*}
  B_3 & = \{132,213,231,312\},                                         \\
  B_4 & = \{1243,1432,2134,2341,3214,3421,4123,4312\},                 \\
  B_5 & = \{12354,15432,21345,23451,43215,45321,51234,54312\},         \\
  B_6 & = \{123465,165432,213456,234561,543216,564321,612345,654312\}.
\end{align*}

We note that for $k=3$ we have
$p_3 = 132 = q_3$,
$r_3 = 213 = s_3^c$,
$s_3 = 231 = r_3^c$, and
$p_3^c = 312 = q_3^c$,
so $B_3$ has only four distinct elements.  For $k \geq 4$ it is easy to verify
that the eight patterns in $B_k$ are all distinct, as illustrated for $k=6$ in Figure~\ref{fig:B6-patterns}.
Notice also that
\[
  r_k=(p_k^r)^c=(p_k^c)^r
  \qquad\text{and}\qquad
  s_k=q_k^r.
\]
Thus $p_k,p_k^c,r_k,r_k^c$ form one reverse/complement orbit, and
$q_k,q_k^c,s_k,s_k^c$ form the other.

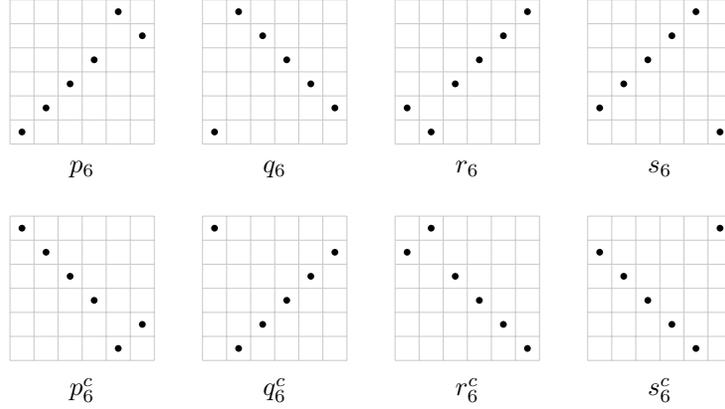
\begin{figure}[hbt]
  \centering
  \begin{tikzpicture}[scale=0.32]
    \begin{scope}[shift={(0,0)}]
      \draw[gray!40] (0,0) grid (6,6);
      \foreach \x/\y in {1/1,2/2,3/3,4/4,5/6,6/5} {\fill (\x-0.5,\y-0.5) circle (4pt);}
      \node[below] at (3,-0.3) {\small $p_6$};
    \end{scope}
    \begin{scope}[shift={(8,0)}]
      \draw[gray!40] (0,0) grid (6,6);
      \foreach \x/\y in {1/1,2/6,3/5,4/4,5/3,6/2} {\fill (\x-0.5,\y-0.5) circle (4pt);}
      \node[below] at (3,-0.3) {\small $q_6$};
    \end{scope}
    \begin{scope}[shift={(16,0)}]
      \draw[gray!40] (0,0) grid (6,6);
      \foreach \x/\y in {1/2,2/1,3/3,4/4,5/5,6/6} {\fill (\x-0.5,\y-0.5) circle (4pt);}
      \node[below] at (3,-0.3) {\small $r_6$};
    \end{scope}
    \begin{scope}[shift={(24,0)}]
      \draw[gray!40] (0,0) grid (6,6);
      \foreach \x/\y in {1/2,2/3,3/4,4/5,5/6,6/1} {\fill (\x-0.5,\y-0.5) circle (4pt);}
      \node[below] at (3,-0.3) {\small $s_6$};
    \end{scope}
    \begin{scope}[shift={(0,-9)}]
      \draw[gray!40] (0,0) grid (6,6);
      \foreach \x/\y in {1/6,2/5,3/4,4/3,5/1,6/2} {\fill (\x-0.5,\y-0.5) circle (4pt);}
      \node[below] at (3,-0.3) {\small $p_6^c$};
    \end{scope}
    \begin{scope}[shift={(8,-9)}]
      \draw[gray!40] (0,0) grid (6,6);
      \foreach \x/\y in {1/6,2/1,3/2,4/3,5/4,6/5} {\fill (\x-0.5,\y-0.5) circle (4pt);}
      \node[below] at (3,-0.3) {\small $q_6^c$};
    \end{scope}
    \begin{scope}[shift={(16,-9)}]
      \draw[gray!40] (0,0) grid (6,6);
      \foreach \x/\y in {1/5,2/6,3/4,4/3,5/2,6/1} {\fill (\x-0.5,\y-0.5) circle (4pt);}
      \node[below] at (3,-0.3) {\small $r_6^c$};
    \end{scope}
    \begin{scope}[shift={(24,-9)}]
      \draw[gray!40] (0,0) grid (6,6);
      \foreach \x/\y in {1/5,2/4,3/3,4/2,5/1,6/6} {\fill (\x-0.5,\y-0.5) circle (4pt);}
      \node[below] at (3,-0.3) {\small $s_6^c$};
    \end{scope}
  \end{tikzpicture}
  \caption{The eight patterns in $B_6$.  Top row: $p_6,q_6,r_6,s_6$.  Bottom row: their complements.}
  \label{fig:B6-patterns}
\end{figure}

With the following definition, we can give a recursive characterisation of the sets $B_k$.
\begin{definition}
  For $\pi \in \Sym_n$ and $1 \leq \ell \leq n$, the \emph{shadow} $\Sh_\ell(\pi)$ is the set of
  $\ell$-patterns contained in~$\pi$.  When $\ell=n-1$, this is exactly the set obtained by
  deleting one entry from~$\pi$ and standardising the result.
\end{definition}

\begin{proposition}\label{prop:recursive-Bk}
  For $k \geq 4$, a permutation $\pi \in \Sym_k$ belongs to $B_k$ if and only if
  $\Sh_{k-1}(\pi)$ consists of exactly one monotone pattern and one element of $B_{k-1}$.
\end{proposition}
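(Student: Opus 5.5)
We prove the two directions separately. Reverse and complement both preserve $B_k$, $B_{k-1}$ and the pair of monotone patterns, and they commute with taking shadows. So the condition ``$\Sh_{k-1}(\pi)$ consists of exactly one monotone pattern and one element of $B_{k-1}$'' is invariant under the group generated by $r$ and $c$.

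\emph{Forward direction.} By the remarks after the definition, $B_k$ is the union of the orbits of $p_k$ and $q_k$. It therefore suffices to compute two shadows directly. For $p_k = 12\cdots(k-2)\,k\,(k-1)$, deleting either of the last two entries gives $\id_{k-1}$. Deleting any of the first $k-2$ entries gives $12\cdots(k-3)\,(k-1)\,(k-2) = p_{k-1}$. Hence $\Sh_{k-1}(p_k)=\{\id_{k-1},p_{k-1}\}$. For $q_k = 1\,k\,(k-1)\cdots 2$, deleting the $1$ gives the decreasing pattern, and deleting any other entry gives $1\,(k-1)\cdots 2 = q_{k-1}$. So $\Sh_{k-1}(q_k)=\{\id_{k-1}^r,q_{k-1}\}$. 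Since $k-1\ge 3$, we have $p_{k-1},q_{k-1}\in B_{k-1}$, which completes this direction. (For $k=4$ we use $B_3$, where $p_3=q_3=132$; this is harmless.)

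\emph{Converse.} Suppose $\Sh_{k-1}(\pi)=\{m,b\}$ with $m$ monotone and $b\in B_{k-1}$. Using the symmetries, we may assume $m=\id_{k-1}$. Then $\pi$ has an increasing subsequence of length $k-1$, so $\pi$ is obtained from $\id_{k-1}$ by inserting one new entry. Write $\pi$ as the identity with the value $v$ inserted at position $i$, with the other entries shifted. Let $j$ be the position where $v$ ``belongs'' in increasing order. If $i=j$, then $\pi=\id_k$, whose shadow has only one element, so this is excluded. Otherwise the displacement $d=|i-j|\ge 1$.

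In that case $\pi$ is the identity with one entry moved $d$ places. Deleting the moved entry gives $\id_{k-1}$. Deleting an entry that is neither the moved one nor one of the entries it jumped over leaves an identity with one entry displaced by $d$ steps. Deleting one of the jumped-over entries reduces the displacement to $d-1$. So the shadow contains $\id_{k-1}$, the same kind of pattern with displacement $d$ (provided $k-1-d\ge 2$ elements remain outside the block, or appropriately counted), and also the displacement-$(d-1)$ pattern if $d\ge 2$.

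Requiring exactly two shadow elements forces one of two situations. Either $d=1$, so $\pi$ is an adjacent transposition of the identity, or the moved entry jumps over everything else, so $\pi$ is a cyclic rotation of $\id_k$. An adjacent transposition at an interior location has a shadow containing both $\id_{k-1}$ and two different adjacent transpositions (from deleting entries before or after it) whenever $k\ge 4$ and the swap is not at an end. Thus only the end swaps $p_k$ and $r_k$ survive. The two full rotations are exactly $s_k = 23\cdots k1$ and $q_k^c = k12\cdots(k-1)$. All four of these lie in $B_k$.

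Finally we check that in each surviving case the non-monotone shadow element really is in $B_{k-1}$. This is consistent with the forward computation, and the hypothesis does not actually need it beyond counting.

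\emph{Main obstacle.} The delicate step is the shadow-size case analysis in the converse. We must show that every other single-insertion permutation produces at least three distinct $(k-1)$-patterns. The boundary cases $k=4$, and moves by distance $d=k-2$, need care, because there different deletions might coincidentally standardise to the same pattern. I would handle these by explicitly listing the deletions of the first, last, and moved-over entries, and checking that they give three pairwise distinct patterns.
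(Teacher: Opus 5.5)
Your forward direction is the same as the paper's. Your converse is correct but argues differently. The paper places the extra point $x$ in a cell $(i,j)$ relative to the increasing subsequence $M$. It then uses the hypothesis $u\in B_{k-1}$, together with the forward direction one level down, to restrict $u$ to $r_{k-1},q_{k-1}^c,s_{k-1},p_{k-1}$ with their known insertion cells. Finally it checks which $(i,j)$ are compatible with the cells $(i,j),(i-1,j),(i,j-1),(i-1,j-1)$ reached by deleting a point of $M$. You never use $u\in B_{k-1}$; you only count the distinct shadow elements as a function of the displacement $d$. This proves a slightly stronger statement: for $k\geq 4$, $\pi\in B_k$ as soon as $\Sh_{k-1}(\pi)$ has exactly two elements, one of them monotone. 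It also avoids the paper's appeal to the forward direction at level $k-1$, which for $k=4$ means level $3$; that level lies outside the proposition's range, though it is trivially checkable. The paper's version has the advantage of being phrased in the cell language that is reused in Lemma~\ref{lem:dangerous-cells} and Theorem~\ref{thm:quadratic-bound}.

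Two small corrections complete your argument and remove the ``main obstacle'' you flag. First, the displacement-$d$ pattern appears as soon as at least one entry lies outside the moved-over block. That happens iff $k-1-d\geq 1$, i.e.\ $d\leq k-2$; your condition $k-1-d\geq 2$ is wrong. Second, the identity with one entry moved $d$ places has exactly $d$ inversions. So the patterns with displacements $0$, $d-1$, $d$ are pairwise distinct whenever $d\geq 2$. Hence every $2\leq d\leq k-2$ gives at least three shadow elements, uniformly in $k\geq 4$. No separate treatment of $k=4$ or $d=k-2$, and no explicit listing, is needed.
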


\begin{proof}
  \textbf{Forward.}  The shadow operation commutes with reverse and complement, so
  it suffices to check the two generators $p_k, q_k$.

  For $p_k = 12\cdots(k{-}2)\,k\,(k{-}1)$: deleting any of the first $k-2$ entries leaves a
  permutation of the same shape, namely $p_{k-1}$; deleting entry $k$ or entry $k-1$ removes the
  only descent, leaving $\id_{k-1}$.  Hence
  $\Sh_{k-1}(p_k) = \{p_{k-1}, \id_{k-1}\}$.

  For $q_k = 1\,k\,(k{-}1)\cdots 2$: deleting the first entry~$1$ leaves
  $\id_{k-1}^r$; deleting any entry from the decreasing tail $k\,(k{-}1)\cdots 2$ leaves $q_{k-1}$.
  Hence $\Sh_{k-1}(q_k) = \{q_{k-1}, \id_{k-1}^r\}$.

  \medskip\noindent
  \textbf{Backward.}  Suppose $\Sh_{k-1}(\pi) = \{m, u\}$ where $m$ is monotone and
  $u \in B_{k-1}$.  By applying the complement if necessary, we may assume $m = \id_{k-1}$.
  Let $x$ be an entry whose deletion leaves $\id_{k-1}$, and let $M$ be the increasing
  $(k-1)$-pattern formed by the remaining entries.  Relative to $M$, the point $x$ lies in some cell
  $(i,j)$, meaning that exactly $i$ points of $M$ lie to the left of $x$ and exactly $j$ lie below
  $x$.

  Since deleting $x$ from $u$ leaves $\id_{k-2}$, the forward direction applied at level $k-1$
  shows that $u$ must be one of
  \[
    r_{k-1},\ q_{k-1}^c,\ s_{k-1},\ p_{k-1}.
  \]
  Equivalently, inserting one point into an increasing $(k-2)$-pattern yields
  \[
    r_{k-1}\text{ in cells }(0,1),(1,0),
  \]
  \[
    q_{k-1}^c\text{ in cell }(0,k-2),
  \]
  \[
    s_{k-1}\text{ in cell }(k-2,0),
  \]
  \[
    p_{k-1}\text{ in cells }(k-3,k-2),(k-2,k-3).
  \]

  Now delete one point of $M$ instead of deleting $x$.  Relative to the remaining increasing
  $(k-2)$-pattern, the point $x$ moves to one of
  \[
    (i,j),\ (i-1,j),\ (i,j-1),\ (i-1,j-1),
  \]
  according to whether the deleted point lies right or left of $x$, and above or below it.  Since
  every such deletion gives the same pattern $u$, every realized cell must be one of the cells listed
  above for $u$.  This forces
  \[
    u=r_{k-1}\text{ and }(i,j)\in\{(0,1),(1,0)\},
  \]
  \[
    u=q_{k-1}^c\text{ and }(i,j)=(0,k-1),
  \]
  \[
    u=s_{k-1}\text{ and }(i,j)=(k-1,0),
  \]
  \[
    u=p_{k-1}\text{ and }(i,j)\in\{(k-2,k-1),(k-1,k-2)\}.
  \]
  These are exactly the insertions producing $r_k$, $q_k^c$, $s_k$, and $p_k$, respectively, so
  $\pi \in B_k$.
\end{proof}

\begin{definition}
  Let $X \subseteq \Sym_k$.  We call $X$ \emph{monotone forcing} if there exists $N$ such that for all
  $n \geq N$,
  \[
    \Av_n(X)=\{\id_n,\id_n^r\}.
  \]
\end{definition}

A trivial example of a monotone-forcing set is $\Sym_k \setminus \{\id_k, \id_k^r\}$, since
for $n \geq k$ the only length $n$ permutations that avoid all non-monotone patterns of length $k$ are the two monotone permutations.

\section{Witness families and minimality}

For each element of $B_k$ there is an infinite family that contains only that pattern and one
monotone pattern.

\begin{definition} \label{def:Bk-witnesses}
  For $n \geq 2$, define
  \begin{alignat*}{2}
    \pi_n    & = 12\cdots(n-2)\,n\,(n-1), \qquad & \rho_n & = 1\,n\,(n-1)\cdots 2, \\
    \sigma_n & = 21\,3\,4\cdots n,               & \tau_n & = 23\cdots n\,1.
  \end{alignat*}
\end{definition}

Clearly $\pi_k = p_k$, $\rho_k = q_k$, $\sigma_k = r_k$, and $\tau_k = s_k$, but for clarity we use different notation
for the infinite families of the witness permutations compared with the patterns. The following proposition is an
analogue of Proposition~\ref{prop:recursive-Bk} for these families.

\begin{proposition}\label{prop:witness-families}
  Fix $k \geq 3$ and $n \geq k$.  Then
  \[
    \Sh_k(\pi_n) = \{12\cdots k, p_k\},\qquad
    \Sh_k(\rho_n) = \{k\cdots 21, q_k\},
  \]
  \[
    \Sh_k(\sigma_n) = \{12\cdots k, r_k\},\qquad
    \Sh_k(\tau_n) = \{12\cdots k, s_k\},
  \]
  and similarly
  \[
    \Sh_k(\pi_n^c)=\{k\cdots 21,p_k^c\},\qquad
    \Sh_k(\rho_n^c)=\{12\cdots k,q_k^c\},
  \]
  \[
    \Sh_k(\sigma_n^c)=\{k\cdots 21,r_k^c\},\qquad
    \Sh_k(\tau_n^c)=\{k\cdots 21,s_k^c\}.
  \]
\end{proposition}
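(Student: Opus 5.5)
We verify the four uncomplemented identities directly. The complemented ones then follow at once, because complement commutes with taking $k$-subpatterns: $\Sh_k(\pi^c)=\Sh_k(\pi)^c$. Complementing $12\cdots k$ gives $k\cdots 21$ and vice versa, which produces the stated right-hand sides. So the whole task is a direct description of which $k$-element subsequences occur in each of the four explicit permutations.

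The structure is the same in each case: the witness is a monotone sequence with one ``defect.'' For $\pi_n$ the defect is the descent $n\,(n-1)$ at the end. For $\sigma_n$ it is the descent $2\,1$ at the start. For $\tau_n$ it is the entry $1$ at the end, below everything. For $\rho_n$ it is the entry $1$ at the start, below a decreasing run.

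The plan is to classify a $k$-subsequence by how it meets the defect. First take $\pi_n$. If a $k$-subsequence uses at most one of the last two positions, it is increasing, giving $12\cdots k$; this is realized since $n-1\ge k-1$. If it uses both, it consists of $k-2$ increasing entries from $1,\dots,n-2$ followed by $n\,(n-1)$, which is order-isomorphic to $p_k$. This case is realizable because $n-2\ge k-2$.

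For $\sigma_n$ the argument is the same with the descent $21$ in front. Using both $2$ and $1$ gives $r_k$; otherwise the subsequence is increasing. For $\tau_n$, a subsequence containing the final $1$ has the form of $k-1$ increasing entries followed by a minimum, which is $s_k$; otherwise it is increasing. For $\rho_n$, a subsequence avoiding the initial $1$ lies in the decreasing run, giving $k\cdots 21$, realizable since $n-1\ge k$ or, when $n=k$, by $\rho_n$ itself. Wait: when $n=k$ the decreasing run has only $k-1$ entries.

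So the main thing to check carefully is the boundary case $n=k$, where some ``generic'' pattern may fail to be realized. For $\rho_k=q_k$, the only $k$-subpattern is $q_k$ itself, so $k\cdots 21$ is not realized. Likewise, for $\pi_k$, $\sigma_k$ and $\tau_k$ the only $k$-subpattern is the permutation itself, so the monotone pattern is not realized either. The statement should therefore be read with shadows as subsets, or with $n\ge k+1$ (for $\rho_n$, $n\ge k+1$ gives a decreasing run of length $n-1\ge k$). For the other three families, $n\ge k+1$ leaves at least $k$ entries forming an increasing sequence once one defect entry is removed. I would prove the equalities for $n\ge k+1$, note that for $n=k$ the shadow is the single pattern $p_k$, $q_k$, $r_k$ or $s_k$, and point out that the applications (each witness contains only that $B_k$ pattern and possibly one monotone pattern) need only the inclusion $\subseteq$, which holds for all $n\ge k$.

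Since the expected answer treats $n\ge k$, I would flag this discrepancy explicitly rather than silently adjust the statement. Beyond it, the only care needed is checking that each of the two described patterns is actually attained when $n\ge k+1$.
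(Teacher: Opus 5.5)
Your argument is essentially the paper's proof. You split a $k$-subsequence according to whether it uses the exceptional entries and pass to the complemented families via $\Sh_k(\pi^c)=\Sh_k(\pi)^c$; the paper does only $\pi_n$ and $\rho_n$ by hand and obtains the rest by symmetry (e.g.\ $\sigma_n=(\pi_n^r)^c$, $\tau_n=\rho_n^r$), whereas you check all four directly. Your boundary flag is correct: the paper's argument only establishes the inclusion $\subseteq$, and at $n=k$ each shadow is the single pattern itself, so the stated equalities genuinely need $n\ge k+1$, while the later uses (Theorem~\ref{thm:minimality}, Lemma~\ref{lem:b4-cross-compatible}) only need $q\in\Sh_k(W_n)\subseteq\{q,m\}$, which holds for all $n\ge k$ --- just remove your early slips ``realized since $n-1\ge k-1$'' and ``when $n=k$, by $\rho_n$ itself,'' which your own later analysis correctly contradicts.
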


\begin{proof}
  For $\pi_n$, an occurrence of a $k$-pattern either omits at least one of the final two entries, in
  which case it lies in an increasing subsequence and hence has pattern $12\cdots k$, or it uses both
  final entries, in which case the relative order is $p_k$.  The argument for $\rho_n$ is the same:
  either one omits the exceptional entry or entries and obtains a monotone
  subsequence, or one includes them and obtains $q_k$.  The claims for the
  other families follow by applying the appropriate symmetry.
\end{proof}

\begin{theorem}\label{thm:minimality}
  Every monotone-forcing subset of $\Sym_k$ contains $B_k$.
\end{theorem}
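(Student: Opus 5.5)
We argue by contraposition, using the witness families of Definition~\ref{def:Bk-witnesses} together with Proposition~\ref{prop:witness-families}. Let $X \subseteq \Sym_k$ be monotone forcing, with threshold $N$, and suppose some $b \in B_k$ is not in $X$. We will exhibit, for every $n \geq \max(N,k+1)$, a non-monotone permutation of length $n$ avoiding $X$. This contradicts $\Av_n(X)=\{\id_n,\id_n^r\}$.

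First we pick the witness. Proposition~\ref{prop:witness-families} assigns to each of the eight patterns of $B_k$ a family from
\[
\pi_n,\ \rho_n,\ \sigma_n,\ \tau_n,\ \pi_n^c,\ \rho_n^c,\ \sigma_n^c,\ \tau_n^c
\]
whose $k$-shadow consists of that pattern together with one monotone pattern $m$. Let $w_n$ be the family attached to $b$, so that $\Sh_k(w_n)=\{m,b\}$. For $k=3$ several patterns coincide, but any attached family works.

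Next we need $m \notin X$. If $m \in X$, then one of $\id_n,\id_n^r$ contains a pattern of $X$, so $\Av_n(X)$ cannot equal $\{\id_n,\id_n^r\}$ for large $n$. Hence a monotone-forcing set contains neither monotone pattern, and in particular $m \notin X$.

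Now $w_n$ avoids every pattern of $X$, because its only $k$-patterns are $m$ and $b$, and neither lies in $X$. Finally, $w_n$ is not monotone for $n \geq 3$. Each of $\pi_n,\rho_n,\sigma_n,\tau_n$ has both an ascent and a descent, and complementation preserves this. So $w_n \in \Av_n(X)\setminus\{\id_n,\id_n^r\}$ for all large $n$, which is the desired contradiction.

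The only delicate point is making sure the monotone pattern in each shadow is actually excluded from $X$; the argument above handles this uniformly. Everything else is bookkeeping, using Proposition~\ref{prop:witness-families} as stated.
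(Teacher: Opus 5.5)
Your proof is correct and follows the paper's argument essentially step for step: you first exclude both monotone patterns from $X$, then use the witness family from Proposition~\ref{prop:witness-families} for the missing element of $B_k$ to produce non-monotone permutations avoiding $X$ at every large length. Taking $n \geq k+1$ and checking explicitly that each $w_n$ has both an ascent and a descent fills in details the paper leaves implicit; the first of these is harmless, since $\Sh_k(w_n)\subseteq\{m,b\}$ is all the argument needs.
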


\begin{proof}
  Let $X \subseteq \Sym_k$ be monotone forcing.

  First, $X$ cannot contain either monotone pattern: if $12\cdots k \in X$, then the increasing
  permutations $\id_n$ ($n \geq k$) never belong to $\Av_n(X)$; similarly if $k\cdots 21 \in X$, then the
  decreasing permutations $\id_n^r$ ($n \geq k$) never belong to $\Av_n(X)$.

  Now suppose that some $q \in B_k$ is not in $X$.  By Proposition~\ref{prop:witness-families},
  there is an infinite family $W_n$ with $\Sh_k(W_n) = \{q, m\}$ for a single monotone pattern~$m$.
  Since neither monotone pattern belongs to $X$ and $q \notin X$, every $W_n$ avoids $X$.
  Moreover, each $W_n$ is non-monotone.  This contradicts the assumption that $X$ is
  monotone forcing.
\end{proof}

\section{Why \texorpdfstring{$B_k$}{Bk} eventually forces monotonicity}

We start with a lemma:

\begin{lemma}\label{lem:dangerous-cells}
  Let $k \geq 4$, and let $M$ be an increasing $(k-1)$-pattern.  If a point is inserted relative to
  $M$ in one of the cells
  \[
    (0,1),\ (1,0),\ (0,k-1),\ (k-1,0),\ (k-2,k-1),\ (k-1,k-2),
  \]
  then the resulting $k$-pattern lies in $B_k$.
\end{lemma}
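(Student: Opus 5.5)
The plan is to compute, for each of the six cells, the $k$-pattern obtained by inserting a single point into an increasing $(k-1)$-pattern, and to check that it is one of the generators $p_k,r_k,s_k,q_k^c$ already listed in the definition of $B_k$. We keep the cell convention of the proof of Proposition~\ref{prop:recursive-Bk}: a point in cell $(i,j)$ has exactly $i$ points of $M$ to its left and exactly $j$ points of $M$ below it. The inserted point then occupies position $i+1$ and has value $j+1$ in the resulting $k$-pattern. The $k-1$ points of $M$ fill the remaining positions and values in increasing order.

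The six computations go as follows.
\begin{itemize}
\item Cell $(0,1)$: the new point is first with value $2$, and the rest is increasing on $\{1,3,\dots,k\}$. This gives $2\,1\,3\cdots k=r_k$.
\item Cell $(1,0)$: the new point is in position $2$ with value $1$. This gives $2\,1\,3\cdots k=r_k$ again.
\item Cell $(0,k-1)$: the new point is first with value $k$, giving $k\,1\,2\cdots(k-1)=q_k^c$. Indeed $q_k=1\,k\,(k-1)\cdots2$, so $q_k^c=k\,1\,2\cdots(k-1)$.
\item Cell $(k-1,0)$: the new point is last with value $1$, giving $2\,3\cdots k\,1=s_k$.
\item Cell $(k-2,k-1)$: the new point is in position $k-1$ with value $k$, and the last position carries value $k-1$. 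This gives $1\,2\cdots(k-2)\,k\,(k-1)=p_k$.
\item Cell $(k-1,k-2)$: the new point is last with value $k-1$, and position $k-1$ carries value $k$. This gives $p_k$ again.
\end{itemize}
In each case the pattern is in $B_k$ by definition.

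There is no real obstacle here; the argument is pure bookkeeping. The one point needing care is that the cell convention matches the one used in Proposition~\ref{prop:recursive-Bk}. That proposition recorded exactly these identifications one level down, with $k-2$ in place of $k-1$, so the statement is its analogue with the index shifted.

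Alternatively, one can avoid recomputing by noting the $D_4$-type symmetry. Reverse-complement preserves increasing patterns and maps cell $(i,j)$ to $(k-1-i,k-1-j)$. Inverse also preserves increasing patterns and maps $(i,j)$ to $(j,i)$. Under these two maps the six cells fall into the orbits $\{(0,1),(1,0),(k-2,k-1),(k-1,k-2)\}$ and $\{(0,k-1),(k-1,0)\}$. One then checks that $B_k$ is closed under these symmetries, using the identities $r_k=(p_k^r)^c$ and $s_k=q_k^r$. This second route requires checking closure under inverse, so the direct six-case computation is the cleaner one to present, and $k\ge4$ is used only so that the six cells are distinct.
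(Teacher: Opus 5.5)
Your six-case computation is correct and is exactly the paper's proof: cells $(0,1)$ and $(1,0)$ give $r_k$, cell $(0,k-1)$ gives $q_k^c$, cell $(k-1,0)$ gives $s_k$, and cells $(k-2,k-1)$ and $(k-1,k-2)$ give $p_k$. One small correction: your closing remark that $k\ge 4$ is used only to make the six cells distinct is inaccurate, since the cells are already distinct for $k=3$ and the same computation lands in $B_3$ there; this does not affect the argument.
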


Figure~\ref{fig:dangerous-cells-k5} shows these cells in the case $k=5$.

\begin{figure}[ht]
  \centering
  \begin{tikzpicture}[scale=0.9]
    \foreach \x/\y in {0/1,1/0,0/4,4/0,3/4,4/3} {
        \fill[gray!25] (\x,\y) rectangle ++(1,1);
        \draw[thick] (\x,\y) rectangle ++(1,1);
      }
    \draw[gray!50] (0,0) grid (5,5);
    \draw[thick] (0,0) rectangle (5,5);
    \draw[dashed, gray] (0,0) -- (5,5);
    \foreach \t in {1,2,3,4} {
        \fill (\t,\t) circle (2.8pt);
      }
    \foreach \t in {0,1,2,3,4} {
        \node[below] at (\t+0.5,-0.05) {\small $\t$};
        \node[left] at (-0.05,\t+0.5) {\small $\t$};
      }
    \node[below] at (2.5,-0.55) {\small $i$};
    \node[left] at (-0.6,2.5) {\small $j$};
    \node at (0.5,1.5) {\scriptsize $(0,1)$};
    \node at (1.5,0.5) {\scriptsize $(1,0)$};
    \node at (0.5,4.5) {\scriptsize $(0,4)$};
    \node at (4.5,0.5) {\scriptsize $(4,0)$};
    \node at (3.5,4.5) {\scriptsize $(3,4)$};
    \node at (4.5,3.5) {\scriptsize $(4,3)$};
  \end{tikzpicture}
  \caption{The cell decomposition determined by an increasing pattern $M=\id_4$ when $k=5$.
    The cell $(i,j)$ records that $i$ points of $M$ lie to the left and $j$ lie below.
    The shaded cells are the six cells from Lemma~\ref{lem:dangerous-cells}.}
  \label{fig:dangerous-cells-k5}
\end{figure}
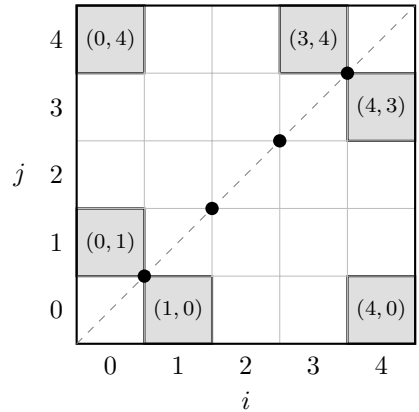

\begin{proof}
  Inserting a point into either of the cells $(0,1)$ and $(1,0)$ both produce the pattern $r_k$. Inserting into cell $(0,k{-}1)$
  produces $q_k^c$. Inserting into cell $(k{-}1,0)$ produces $s_k$, and inserting into either $(k{-}2,k{-}1)$ or $(k{-}1,k{-}2)$
  produces $p_k$.  All four distinct patterns belong to~$B_k$.
\end{proof}

The next theorem gives a general, explicit bound for when $B_k$ forces monotonicity.
Write $\mathrm{LIS}(\pi)$ and $\mathrm{LDS}(\pi)$ for the lengths of a longest increasing and a
longest decreasing subsequence of~$\pi$, respectively.

\begin{theorem}\label{thm:quadratic-bound}
  Let $k \geq 4$.  If a permutation $\pi$ avoids $B_k$ and has an increasing subsequence of length
  $3k-7$, then $\pi$ is increasing.  By symmetry, if $\pi$ avoids $B_k$ and has a decreasing
  subsequence of length $3k-7$, then $\pi$ is decreasing.

  Consequently,
  \[
    \Av_n(B_k)=\{\id_n,\id_n^r\}\qquad\text{for all }n \geq (3k-8)^2+1.
  \]
\end{theorem}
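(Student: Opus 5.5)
Given an increasing subsequence $I$ of length $L=3k-7$ in a permutation $\pi$ avoiding $B_k$, I will show that every other point $x$ of $\pi$ extends $I$. From this, monotonicity follows by a maximality argument.

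Fix a maximal increasing subsequence $I=(a_1<\dots<a_L)$, taken in both position and value; we may take $L\ge 3k-7$. Suppose $\pi$ is not increasing. Then some entry $x\notin I$ exists; if $I$ is chosen of maximum length, $x$ cannot be inserted into $I$ to give a longer increasing sequence. Place $x$ in a cell $(i,j)$ relative to $I$, where $i$ counts the points of $I$ to the left of $x$ and $j$ counts those below. Insertability means exactly $i=j$, so $i\neq j$. Lemma~\ref{lem:dangerous-cells} then does the work: choose a $(k-1)$-element subsequence $M\subseteq I$ so that $x$ lands in one of the dangerous cells relative to $M$. The resulting $k$-pattern lies in $B_k$, a contradiction.

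The core is the case analysis on $(i,j)$ with $i\ne j$, $0\le i,j\le L$. By the $180^\circ$ rotation symmetry (reverse-complement, which preserves $B_k$ and increasing sequences) we may assume $i>j$, so $x$ lies strictly below the diagonal. The subsequence $M$ consists of $\alpha$ points among the first $j$ (left of and below $x$), $\beta$ among the $i-j$ middle points (left of and above $x$), and $\gamma$ among the last $L-i$ points (right of and above $x$), with $\alpha+\beta+\gamma=k-1$. Relative to $M$, $x$ lies in cell $(\alpha+\beta,\alpha)$. The available dangerous cells below the diagonal are $(1,0)$, $(k-1,0)$ and $(k-1,k-2)$. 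These are achievable as follows:
\begin{itemize}
\item $(1,0)$: set $\alpha=0$, $\beta=1$, $\gamma=k-2$. This needs $L-i\ge k-2$.
\item $(k-1,0)$: set $\alpha=0$, $\beta=k-1$. This needs $i-j\ge k-1$.
\item $(k-1,k-2)$: set $\alpha=k-2$, $\beta=1$, $\gamma=0$. This needs $j\ge k-2$.
\end{itemize}
If all three fail, then $L-i\le k-3$, $i-j\le k-2$ and $j\le k-3$. Summing gives $L\le 3k-8$, contradicting $L\ge 3k-7$. This counting is the heart of the argument and explains the constant $3k-7$. The main thing to double-check is that the three pieces really are disjoint and that the cell coordinates match the lemma's convention (points of $M$ to the left and below).

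To conclude that $\pi$ is increasing, note the argument shows that every point of $\pi$ can be inserted into a maximum-length increasing subsequence, contradicting maximality unless no outside point exists. Hence $\pi=\id_n$. The decreasing statement follows by applying complement, since $B_k$ is complement-closed.

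For the consequence, let $n\ge (3k-8)^2+1$. Erd\H{o}s--Szekeres gives an increasing or decreasing subsequence of length $3k-7$, so every $B_k$-avoider is monotone. Conversely, $\id_n$ and $\id_n^r$ avoid $B_k$, since $B_k$ contains no monotone pattern for $k\ge4$.
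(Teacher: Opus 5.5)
Your proof is correct and is essentially the paper's argument: take a longest increasing subsequence, place an off-diagonal point in a cell $(i,j)$, and choose points from the three pieces of $I$ to land in a dangerous cell from Lemma~\ref{lem:dangerous-cells}, where the three failure conditions sum to $L\le 3k-8$. The paper works above the diagonal with cells $(0,k-1)$, $(0,1)$, $(k-2,k-1)$, while you work below it, and your explicit appeal to the $180^\circ$ rotation (which preserves both $B_k$ and increasing subsequences) properly justifies the paper's terse ``by reversing the argument'' step.
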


\begin{proof}
  Let $M=(m_1<\cdots<m_r)$ be a longest increasing subsequence of $\pi$, where $r \geq 3k-7$.  For a point
  $x$ not in $M$, define its cell $(i,j)$ by requiring that exactly $i$ points of $M$ lie to the left
  of $x$ and exactly $j$ points of $M$ lie below $x$.

  Now suppose that $x$ lies above the diagonal, so $i<j$.

  If $j-i \geq k-1$, then choosing $(k-1)$ points of $M$ from the gap between $i$ and $j$ produces
  a shorter increasing sequence $M'$. Relative to $M'$ the point $x$ lies in the
  cell $(0,k-1)$ and hence a pattern from $B_k$ by Lemma~\ref{lem:dangerous-cells}.

  If $r-j \geq k-2$, then choosing one point of $M$ from the gap and $(k-2)$ points above $j$
  produces the cell $(0,1)$ and again a pattern from $B_k$ by
  Lemma~\ref{lem:dangerous-cells}.

  If $i \geq k-2$, then choosing $(k-2)$ points of $M$ below $i$ and one from the gap produces the
  cell $(k-2,k-1)$ and again a pattern from $B_k$ by
  Lemma~\ref{lem:dangerous-cells}.

  Figure~\ref{fig:quadratic-proof-k5} illustrates these three choices in the case $k=5$ and
  $r=8$.

  \begin{figure}[ht]
    \centering
    \begin{tikzpicture}[scale=0.48, >=stealth]
      \begin{scope}[shift={(0,0)}, scale=0.8]
        \draw[gray!45] (0,0) grid (9,9);
        \draw[thick] (0,0) rectangle (9,9);
        \draw[dashed, gray] (0,0) -- (9,9);
        \foreach \t in {1,2,3,4,5,6,7,8} {
            \fill (\t,\t) circle (2.6pt);
          }
        \foreach \t in {2,3,4,5} {
            \fill[blue!70!black] (\t,\t) circle (3.5pt);
            \draw[->, blue!70!black, thick] (1.5,6.5) -- (\t,\t);
          }
        \fill[red!75!black] (1.5,6.5) circle (3.7pt);
        \node[red!75!black, above left] at (1.5,6.5) {\small $x$};
        \node[above] at (4.5,9.35) {\small $j-i \geq 4$};
        \node[below] at (4.5,-0.45) {\small $x$ lies in cell $(0,4)$};
        \node[below] at (4.5,-1.55) {\small relative to $M'$};
      \end{scope}

      \begin{scope}[shift={(9,0)}, scale=0.8]
        \draw[gray!45] (0,0) grid (9,9);
        \draw[thick] (0,0) rectangle (9,9);
        \draw[dashed, gray] (0,0) -- (9,9);
        \foreach \t in {1,2,3,4,5,6,7,8} {
            \fill (\t,\t) circle (2.6pt);
          }
        \foreach \x/\y in {4/4,5/5,6/6,7/7} {
            \fill[blue!70!black] (\x,\y) circle (3.5pt);
            \draw[->, blue!70!black, thick] (2.5,4.5) -- (\x,\y);
          }
        \fill[red!75!black] (2.5,4.5) circle (3.7pt);
        \node[red!75!black, above left] at (2.5,4.5) {\small $x$};
        \node[above] at (4.5,9.35) {\small $r-j \geq 3$};
        \node[below] at (4.5,-0.45) {\small $x$ lies in cell $(0,1)$};
        \node[below] at (4.5,-1.55) {\small relative to $M'$};
      \end{scope}

      \begin{scope}[shift={(18,0)}, scale=0.8]
        \draw[gray!45] (0,0) grid (9,9);
        \draw[thick] (0,0) rectangle (9,9);
        \draw[dashed, gray] (0,0) -- (9,9);
        \foreach \t in {1,2,3,4,5,6,7,8} {
            \fill (\t,\t) circle (2.6pt);
          }
        \foreach \x/\y in {2/2,3/3,4/4,5/5} {
            \fill[blue!70!black] (\x,\y) circle (3.5pt);
            \draw[->, blue!70!black, thick] (4.5,5.5) -- (\x,\y);
          }
        \fill[red!75!black] (4.5,5.5) circle (3.7pt);
        \node[red!75!black, above right] at (4.5,5.5) {\small $x$};
        \node[above] at (4.5,9.35) {\small $i \geq 3$};
        \node[below] at (4.5,-0.45) {\small $x$ lies in cell $(3,4)$};
        \node[below] at (4.5,-1.55) {\small relative to $M'$};
      \end{scope}
    \end{tikzpicture}
    \caption{The three cases in the proof of Theorem~\ref{thm:quadratic-bound}. The chosen
      $4$-point subsequence places $x$ in one of the dangerous cells from Lemma~\ref{lem:dangerous-cells}.}
    \label{fig:quadratic-proof-k5}
  \end{figure}
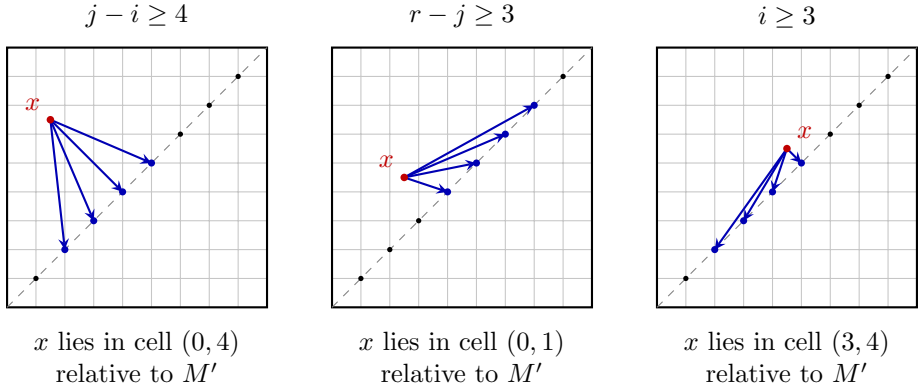

  Therefore, if $x$ does not create a pattern from $B_k$, then all three inequalities fail, and we must have
  \[
    j-i \leq k-2,\qquad r-j \leq k-3,\qquad i \leq k-3.
  \]
  Hence
  \[
    j \geq r-(k-3)\geq 2k-4,
  \]
  but also
  \[
    j \leq i+(k-2)\leq 2k-5,
  \]
  a contradiction.  So no point can lie above the diagonal.  By reversing the argument, no point can
  lie below the diagonal either.

  Thus every point of $\pi$ lies on the diagonal cells determined by $M$.  But a point on a diagonal
  cell together with $M$ implies the existence of a monotone subsequence of length $r+1$,
  contradicting the maximality of $M$.  Therefore $\pi=M$ is increasing.  The
  decreasing case is symmetric.

  Now let $\pi$ be a non-monotone permutation avoiding $B_k$.  The first part shows that
  \[
    \mathrm{LIS}(\pi)\leq 3k-8,\qquad \mathrm{LDS}(\pi)\leq 3k-8.
  \]
  By the Erd\H{o}s--Szekeres theorem, every permutation of length $(3k-8)^2+1$, or longer, has either an
  increasing or a decreasing subsequence of length $3k-7$.  Hence no non-monotone permutation of that
  length can avoid $B_k$.
\end{proof}

Let $N_k$ be the least integer such that $\Av_n(B_k)=\{\id_n,\id_n^r\}$ for all $n \geq N_k$.
Theorem~\ref{thm:quadratic-bound} shows that $N_k \leq (3k-8)^2+1$.
We defer sharper upper and lower bounds on $N_k$ to Section~\ref{sec:further-nk}.  We now use the
structure of the sets $B_k$ to prove that the reverse-reply strategy works for $k=4$ and sufficiently large $n$.

\section{Proving the Reverse-Reply Strategy for \texorpdfstring{$k=4$}{k=4}}

We now use the structural analysis of the sets $B_k$ to prove that the move-by-move reverse
strategy works for $k=4$.  By Corollary~\ref{cor:conditional-reverse}, it is enough to verify that
whenever a non-monotone pattern is legal from a reverse-closed monotone-free state, its reverse
remains legal after that move.

The proof splits into three steps.  We first handle the four reverse pairs inside $B_4$ using the
witness families from Definition~\ref{def:Bk-witnesses} in Lemma~\ref{lem:b4-cross-compatible}.
We then treat the six remaining non-monotone reverse pairs,
other than $\{2413,3142\}$, using extended witness families from Definition~\ref{def:extended-witness-family}
in Lemma~\ref{lem:reverse-reply-legality}.
Finally, we resolve the hard pair $\{2413,3142\}$ using the support sets from Definition~\ref{def:external-supports}
in Proposition~\ref{prop:hard-pair-families}.

Figure~\ref{fig:k4-proof-roadmap} summarizes this division of cases and the tools used in each branch.

\begin{figure}[ht]
  \centering
  \begin{tikzpicture}[x=0.9cm,y=0.9cm,scale=1,transform shape,>=stealth]
    \tikzstyle{box}=[draw, rounded corners, align=center, inner sep=5pt]

    \node[box, text width=4.9cm] (start) at (0,4.2)
    {reverse-closed monotone-free state $F$\\legal non-monotone move $p$};

    \node[box, text width=3.2cm] (b4) at (-5.0,1.7)
    {$p \in B_4$\\Lemma~\ref{lem:b4-cross-compatible}\\singleton witnesses};

    \node[box, text width=3.6cm] (nonhard) at (0,1.7)
    {$p \notin B_4$ and $p \notin \{2413,3142\}$\\Definition~\ref{def:extended-witness-family},\\Proposition~\ref{prop:extended-witnesses},\\Lemma~\ref{lem:reverse-reply-legality}};

    \node[box, text width=3.2cm] (hard) at (5.0,1.7)
    {$p \in \{2413,3142\}$\\Definition~\ref{def:external-supports},\\Proposition~\ref{prop:hard-pair-families}};

    \node[box, text width=5.2cm] (reply) at (0,-0.9)
    {the reverse reply $p^r$ is legal after $p$};

    \node[box, text width=4.1cm] (cor) at (0,-3.0)
    {Corollary~\ref{cor:conditional-reverse}\\reverse strategy wins};

    \node[box, text width=3.6cm] (thm) at (0,-4.8)
    {Theorem~\ref{thm:k4}\\$\Sym_n$ is a P-position for $n \geq 10$};

    \draw[->] (start) -- (b4);
    \draw[->] (start) -- (nonhard);
    \draw[->] (start) -- (hard);
    \draw[->] (b4) -- (reply);
    \draw[->] (nonhard) -- (reply);
    \draw[->] (hard) -- (reply);
    \draw[->] (reply) -- (cor);
    \draw[->] (cor) -- (thm);
  \end{tikzpicture}
  \caption{Logical flow of the $k=4$ reverse-reply proof.}
  \label{fig:k4-proof-roadmap}
\end{figure}
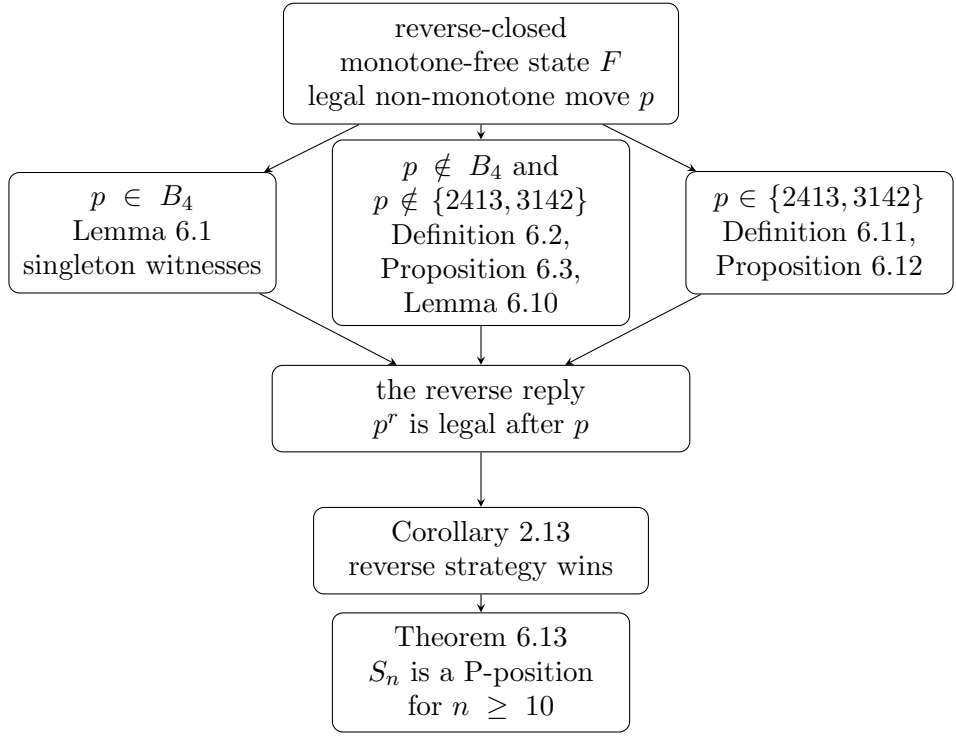

We begin with the patterns in $B_4$.

\begin{lemma}\label{lem:b4-cross-compatible}
  For each $q \in B_k$, the witness family from Proposition~\ref{prop:witness-families} for
  $q$ avoids $q^r$.  In particular, after $q$ is forbidden, $q^r$ remains legal
  (witnessed by the singleton-pattern family for $q^r$, which avoids $q$).
\end{lemma}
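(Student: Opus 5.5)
The plan is to reduce the first claim to Proposition~\ref{prop:witness-families}. For each $q\in B_k$ that proposition gives a witness family $W_n$ with $\Sh_k(W_n)=\{m,q\}$, where $m$ is a monotone pattern. So $W_n$ avoids $q^r$ as soon as $q^r\notin\{m,q\}$. Two checks are needed.

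First, $q^r$ is never monotone. This holds because $q$ is non-monotone and reversal preserves monotonicity.

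Second, $q^r\neq q$ for every $q\in B_k$. For $k\ge 3$ no permutation of length at least $2$ equals its own reverse, since the first and last entries would have to coincide. Alternatively, one can read this off the explicit relations: $p_k^r=r_k^c$ and $q_k^r=s_k$, and similarly under complement, so reversal pairs the elements of $B_k$ off in distinct pairs. Hence $q^r\notin\Sh_k(W_n)$ for every $n\ge k$, which is the first claim.

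For the ``in particular'' clause, apply the first claim to $q^r$ in place of $q$. This is allowed because $B_k$ is reverse-closed: the relations $r_k=(p_k^c)^r$ and $s_k=q_k^r$, together with complements, show that reversal maps $B_k$ to itself. So $q^r\in B_k$, and its witness family $W'_n$ satisfies $\Sh_k(W'_n)=\{m',q^r\}$, which avoids $(q^r)^r=q$ and contains $q^r$.

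The game-theoretic reading needs care. Suppose $F$ is reverse-closed and monotone-free, and we want $W'_n$ to survive $F\cup\{q\}$. Since $W'_n$ only contains $m'$ and $q^r$, it avoids $F$ unless $q^r\in F$. But $q\notin F$ because $q$ is a legal new move, and reverse-closure of $F$ then gives $q^r\notin F$. Hence $W'_n\in\Av_n(F\cup\{q\})$ and it contains $q^r$, so $q^r$ is legal. This holds for every $n\ge k$.

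I do not expect a real obstacle. The only points needing care are confirming that reverse maps $B_k$ into itself, which follows from the stated identities closed under complement, and handling the degenerate case $k=3$, where $B_3$ has four elements; there the same argument applies directly ($132\leftrightarrow231$, $213\leftrightarrow312$).
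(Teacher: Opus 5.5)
Your proposal is correct and follows essentially the same route as the paper: both reduce to the two-element shadows of Proposition~\ref{prop:witness-families}, check that $q^r$ is neither monotone nor equal to $q$, and then apply the same argument to the family for $q^r$. Your explicit check that reversal maps $B_k$ to itself, so that $q^r$ actually has a witness family, is a detail the paper leaves implicit. Your reading in terms of a forbidden set $F$ is exactly what the paper does later in Case~1 of Lemma~\ref{lem:reverse-reply-legality}.
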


\begin{proof}
  Let $W_n$ be the witness permutation for $q$ from Proposition~\ref{prop:witness-families}.  Then
  $\Sh_k(W_n) = \{q, m\}$ where $m$ is a monotone pattern, and the witness $W_n'$ for $q^r$
  satisfies $\Sh_k(W_n') = \{q^r, m'\}$ where $m'$ is the other monotone pattern.  Since $q^r$ is
  non-monotone, $q^r \neq m$; since no permutation of length $\geq 2$ equals its
  reverse, $q^r \neq q$, so $q^r \notin \{q, m\}$ and $W_n$ avoids $q^r$.  Symmetrically, $q$ is
  non-monotone and hence $q \neq m'$, while $q \neq q^r$, so $q \notin \{q^r, m'\}$ and
  $W_n'$ avoids $q$.
\end{proof}

We now handle the non-$B_4$ patterns.

\begin{definition} \label{def:extended-witness-family}
  For a non-monotone $p \in \Sym_4 \setminus B_4$ and a specified companion $q \in B_4$, an
  \emph{extended witness family} for $(p,q)$ is any choice of permutations $E_n(p,q)$, one for each
  $n \geq 7$, such that
  \[
    \Sh_4(E_n(p,q)) \subseteq \{p, q, 1234, 4321\}.
  \]
\end{definition}
The cutoff $n \geq 7$ is explained by the constructions below, because it ensures
the long monotone block then has length at least $4$, so every $4$-pattern is forced either to lie
entirely in that block or to use the exceptional entries together with the monotone block.

\begin{proposition}\label{prop:extended-witnesses}
  For each of the $6$ non-$B_4$, non-monotone reverse pairs $\{p,p^r\}$ other than
  $\{2413,3142\}$, there exist two companion patterns $q_1, q_2 \in B_4$ from
  distinct reverse pairs, together with permutation families $E_n(p,q_i)$ valid for all
  $n \geq 7$, such that
  \[
    \Sh_4\bigl(E_n(p,q_i)\bigr) \setminus \{1234,4321\} = \{p, q_i\}.
  \]
  Moreover, each such family avoids $p^r$.
\end{proposition}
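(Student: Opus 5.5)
The plan is to give explicit constructions: each family is a long monotone run with one or two exceptional entries, and the shadow is read off as in the proof of Proposition~\ref{prop:witness-families}. First list the pairs. Removing the two monotone patterns and the eight patterns of $B_4$ from $\Sym_4$ leaves seven reverse pairs. Besides $\{2413,3142\}$ these are
\[
\{1324,4231\},\ \{2143,3412\},\ \{1342,2431\},\ \{1423,3241\},\ \{3124,4213\},\ \{2314,4132\}.
\]
The symmetries reverse, complement and inverse preserve the monotone patterns and $B_4$, and they commute with $\Sh_4$. So a family $E_n$ for $(p,q)$ yields families for the images of $(p,q)$ under these symmetries. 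Reverse maps a family for $(p,q)$ to one for $(p^r,q^r)$, so it suffices to treat one pattern from each pair. Inverse and complement map $\{1342,2431\}$ to the other three pairs of the last four. Hence only three representatives need direct constructions: $1324$, $2143$ and $1342$. For each I need two families whose companions lie in distinct reverse pairs of $B_4$, whose shadow is exactly $\{p,q_i\}$ plus monotone patterns, and which avoid $p^r$.

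\textbf{Case $1324$.} Take $E_n=1\,3\,2\,4\,5\cdots n$. An occurrence of a $4$-pattern either avoids one of the entries $3,2$, and is then increasing, or uses the single inversion $32$. In the latter case the other two entries lie either one before and one after, giving $1324$, or both after, giving $2134=r_4$. The pattern $1243$ cannot occur because only one entry precedes the inversion. The reversed-complemented family $1\,2\cdots(n-3)\,(n-1)\,(n-2)\,n$ gives companion $1243=p_4$. The pairs $\{2134,4312\}$ and $\{1243,3421\}$ are distinct reverse pairs. Both families have exactly one inversion, so they avoid $4231$, which has five inversions.

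\textbf{Case $2143$.} Take $E_n=2\,1\,n\,(n-1)\cdots 3$. An occurrence uses zero, one or two of the entries $2,1$ followed by entries of the decreasing tail. This gives $4321$, $1432=q_4$, or $2143$, respectively. The reverse-complement of this family, $(n-2)\cdots 1\,n\,(n-1)$, gives companion $3214$, which lies in the reverse pair $\{3214,4123\}$, distinct from $\{1432,2341\}$. Neither family contains $3412$, since each has no increasing pair followed by a higher increasing pair.

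\textbf{Case $1342$ (main obstacle).} The naive insertions fail here. For example, $1\,3\,4\cdots n\,2$ contains both $1342$ and $2341$, and moving the entry $2$ inward creates $2314$ as a third non-monotone pattern. I would therefore run a small systematic search over permutations of the form (increasing block of length at least $4$) with two or three displaced entries at the ends. Good starting candidates are insertions of one entry into an otherwise monotone permutation near a corner, and concatenations of a short increasing prefix with a decreasing tail, as in the $2143$ case. For each candidate I would compute $\Sh_4$ by the same case split on how many exceptional entries an occurrence uses.

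Once a family with shadow $\{1342,q\}\cup\{\text{monotone}\}$ is found, a second family with a companion from a different reverse pair should come from a symmetry that fixes $1342$ up to its own orbit. I would look for such a symmetry first and otherwise construct the second family directly. In each case, avoidance of $p^r$ follows immediately because $p^r$ is not in the computed shadow. The cutoff $n\ge 7$ ensures that the monotone block has length at least $4$, so the case split on exceptional entries is exhaustive and uniform in $n$.
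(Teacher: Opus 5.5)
\textbf{The gap is the case $1342$.} Your cases $1324$ and $2143$ are correct and agree with the paper. The families $1\,2\cdots(n-3)\,(n-1)\,(n-2)\,n$ and $2\,1\,n\,(n-1)\cdots 3$ are exactly the paper's families for $(1324,1243)$ and $(2143,1432)$. Your other two families are their reverse-complement images, and reverse-complement fixes $1324$ and $2143$. (Small slip: $3412$ is an increasing pair followed by a \emph{lower} increasing pair, not a higher one. Your families have no two successive disjoint increasing pairs at all, so the conclusion stands.)

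By your own reduction, $1342$ carries four of the six reverse pairs, and you give no family for it. The reason you discard the natural candidate is also wrong: $2341=s_4$ lies in $B_4$. So $1\,3\,4\cdots n\,2$ is a valid witness for $(1342,2341)$, not a failure. Omitting the final $2$ gives $1234$. Using the $2$ together with the initial $1$ gives $1342$. Using the $2$ without the $1$ gives $2341$. This is exactly the paper's family.

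Your fallback for the second companion cannot work as stated. The orbit of $1342$ under the eight symmetries of the square has size $8$, so no nontrivial symmetry fixes $1342$, and the second family must be built directly. The paper uses $1\,2\cdots(n-3)\,(n-1)\,n\,(n-2)$:
\begin{itemize}
\item taking two of the last three entries gives $1234$ or $1243$;
\item taking all three gives $1342$.
\end{itemize}
So the companion is $1243=p_4$, whose reverse pair $\{1243,3421\}$ is distinct from $\{2341,1432\}$.

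When you transport these two families to $1423$, $3241$, $4132$, $2314$, note that inverse does not preserve reverse pairs. It satisfies $(\pi^{-1})^r=(\pi^c)^{-1}$, so two companions from distinct reverse pairs would collapse into one if $q_2=q_1^c$. Here $1243^c=4312\neq 2341$, so distinctness survives. With these two families, your symmetry reduction completes the proof along the same lines as the paper.
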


\begin{example}
  For
  \[
    E_n(1324,1243)=1,2,\ldots,n-3,\; n-1,\; n-2,\; n,
  \]
  the only non-monotone $4$-patterns are $1243$ and $1324$.  Reversing gives
  \[
    E_n(1324,1243){}^r=n,\; n-2,\; n-1,\; n-3,\ldots,1,
  \]
  whose only non-monotone $4$-patterns are $3421$ and $4231$.  This sample family illustrates both
  one of the witness families from Proposition~\ref{prop:extended-witnesses} and the
  reversal step used later in
  Lemma~\ref{lem:reverse-reply-legality}.  Figure~\ref{fig:extended-witness-example} shows the case
  $n=9$ together with its reverse.
\end{example}

\begin{figure}[ht]
  \centering
  \begin{tikzpicture}[scale=0.42]
    \begin{scope}[shift={(0,0)}]
      \draw[gray!40] (0,0) grid (9,9);
      \draw[thick] (0,0) rectangle (9,9);
      \foreach \x/\y in {1/1,2/2,3/3,4/4,5/5} {
          \fill[black] (\x-0.5,\y-0.5) circle (3.1pt);
        }
      \foreach \x/\y in {6/6,7/8,8/7,9/9} {
          \fill[blue!70!black] (\x-0.5,\y-0.5) circle (3.6pt);
        }
      \node[below] at (4.5,-0.45) {\small $E_9(1324,1243)=123456879$};
      \node[above] at (4.5,9.35) {\small $1324$ witness in blue};
    \end{scope}

    \begin{scope}[shift={(12,0)}]
      \draw[gray!40] (0,0) grid (9,9);
      \draw[thick] (0,0) rectangle (9,9);
      \foreach \x/\y in {1/9,2/7,3/8,4/6} {
          \fill[blue!70!black] (\x-0.5,\y-0.5) circle (3.6pt);
        }
      \foreach \x/\y in {5/5,6/4,7/3,8/2,9/1} {
          \fill[black] (\x-0.5,\y-0.5) circle (3.1pt);
        }
      \node[below] at (4.5,-0.45) {\small $E_9(1324,1243)^r=978654321$};
      \node[above] at (4.5,9.35) {\small $4231$ witness in blue};
    \end{scope}
  \end{tikzpicture}
  \caption{An instance of the family $E_n(1324,1243)$ and its reverse, shown here for
    $n=9$.}
  \label{fig:extended-witness-example}
\end{figure}
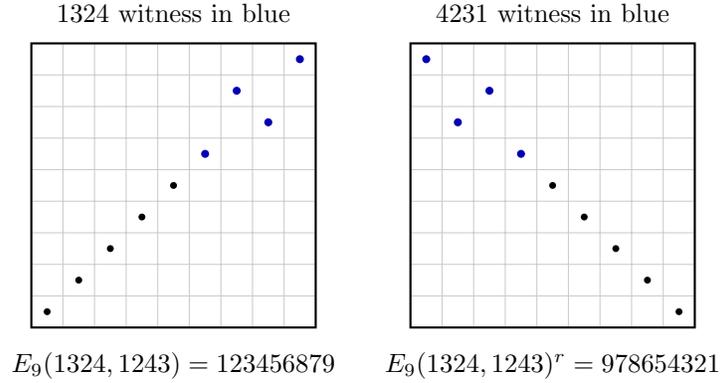

\begin{proof}
  Each displayed family in Table~\ref{tab:extended-witnesses} consists of one long monotone block together with
  at most three exceptional entries.  Consequently every non-monotone $4$-pattern must use at least
  one exceptional entry, and once the chosen exceptional entries are fixed, the tail contributes only
  monotone order.  Thus the verification reduces to a short inspection according to which exceptional
  entries are used and how many entries come from the tail.

  The preceding example checks the first row.  As a second representative case, in the row
  \[
    E_n(2143,1432)=2,\; 1,\; n,n{-}1,\ldots,3,
  \]
  choosing four entries from the decreasing tail gives the monotone pattern $4321$, choosing one of
  the first two entries together with three entries from the tail always gives $1432$, and choosing
  both initial entries together with two entries from the tail gives $2143$.  Thus the only
  non-monotone patterns are again the target and its stated companion.

  The two remaining displayed symmetry classes are checked in the same way: once the
  exceptional entries are fixed, the long tail contributes only monotone order.  The other
  $20$ ordered pairs $(p,q)$ are obtained from these four representatives by reverse, complement, and
  inverse symmetries.
\end{proof}

\begin{table}[ht]
  \centering
  \small
  \setlength{\tabcolsep}{4pt}
  \begin{tabular}{ccclc}
    \toprule
    $p$                                                   & $p^r$  & $q$    & Family $E_n(p,q)$ & $E_7(p,q)$ \\
    \midrule
    $1324$                                                & $4231$ & $1243$ &
    $\underline{1,2,\ldots,n{-}3},\; n{-}1,\; n{-}2,\; n$ &
    \begin{tabular}{@{}c@{}}
      \begin{tikzpicture}[scale=0.20,baseline=(current bounding box.center)]
        \draw[gray!35] (0,0) grid (7,7);
        \draw[thick] (0,0) rectangle (7,7);
        \foreach \x/\y in {1/1,2/2,3/3,4/4} {
            \fill[black] (\x-0.5,\y-0.5) circle (3.8pt);
          }
        \foreach \x/\y in {5/6,6/5,7/7} {
            \fill[blue!70!black] (\x-0.5,\y-0.5) circle (4.2pt);
          }
      \end{tikzpicture} \\[-1pt]
      \scriptsize $1234657$
    \end{tabular}                                                                       \\
    $1342$                                                & $2431$ & $1243$ &
    $\underline{1,2,\ldots,n{-}3},\; n{-}1,\; n,\; n{-}2$ &
    \begin{tabular}{@{}c@{}}
      \begin{tikzpicture}[scale=0.20,baseline=(current bounding box.center)]
        \draw[gray!35] (0,0) grid (7,7);
        \draw[thick] (0,0) rectangle (7,7);
        \foreach \x/\y in {1/1,2/2,3/3,4/4} {
            \fill[black] (\x-0.5,\y-0.5) circle (3.8pt);
          }
        \foreach \x/\y in {5/6,6/7,7/5} {
            \fill[blue!70!black] (\x-0.5,\y-0.5) circle (4.2pt);
          }
      \end{tikzpicture} \\[-1pt]
      \scriptsize $1234675$
    \end{tabular}                                                                       \\
    $1342$                                                & $2431$ & $2341$ &
    $1,\; \underline{3,4,\ldots,n},\; 2$                  &
    \begin{tabular}{@{}c@{}}
      \begin{tikzpicture}[scale=0.20,baseline=(current bounding box.center)]
        \draw[gray!35] (0,0) grid (7,7);
        \draw[thick] (0,0) rectangle (7,7);
        \foreach \x/\y in {2/3,3/4,4/5,5/6,6/7} {
            \fill[black] (\x-0.5,\y-0.5) circle (3.8pt);
          }
        \foreach \x/\y in {1/1,7/2} {
            \fill[blue!70!black] (\x-0.5,\y-0.5) circle (4.2pt);
          }
      \end{tikzpicture} \\[-1pt]
      \scriptsize $1345672$
    \end{tabular}                                                                       \\
    $2143$                                                & $3412$ & $1432$ &
    $2,\; 1,\; \underline{n,n{-}1,\ldots,3}$              &
    \begin{tabular}{@{}c@{}}
      \begin{tikzpicture}[scale=0.20,baseline=(current bounding box.center)]
        \draw[gray!35] (0,0) grid (7,7);
        \draw[thick] (0,0) rectangle (7,7);
        \foreach \x/\y in {3/7,4/6,5/5,6/4,7/3} {
            \fill[black] (\x-0.5,\y-0.5) circle (3.8pt);
          }
        \foreach \x/\y in {1/2,2/1} {
            \fill[blue!70!black] (\x-0.5,\y-0.5) circle (4.2pt);
          }
      \end{tikzpicture} \\[-1pt]
      \scriptsize $2176543$
    \end{tabular}                                                                       \\
    \bottomrule
  \end{tabular}
  \caption{Extended witness families for non-$B_4$ patterns (excluding $2413/3142$), listed up to
    the symmetries of the square.  The four displayed rows represent all $24$ ordered pairs $(p,q)$:
    every other pair is obtained from one of these by applying reverse, complement, or inverse to the
    pair $(p,q)$ and to the witness family.  The underlined segment is the long monotone block in each
    family.  In each plot of $E_7(p,q)$, the blue points are the exceptional entries and the black
    points form the long monotone block.}
  \label{tab:extended-witnesses}
\end{table}

\begin{example}
  Let
  \[
    F=\{1432,2341\}.
  \]
  Then $F$ is reverse-closed, contains no monotone pattern, and omits the reverse pair
  $1243/3421$ from $B_4$. The family $E_n(1324,1243)$ above shows that $1324$ is legal from $\Av_n(F)$,
  so if Player I chooses $1324$ we can use the family $E_n(4231,3421) = E_n(1324,1243)^r$ (which lies in $\Av_n(F\cup\{1324\})$)
  to see that the reverse reply $4231$ is legal.  This is the first branch of the non-$B_4$ argument used later in
  Lemma~\ref{lem:reverse-reply-legality}.
\end{example}

To treat the remaining states in which both companion reverse pairs are already forbidden, we now
introduce one-entry inflation families.

\begin{definition}
  Fix $\beta \in \Sym_6$, an entry index $i \in \{1,\dots,6\}$, and a sign $\varepsilon \in \{+,-\}$.
  For $t \geq 1$, let $I_t(\beta,i,\varepsilon)$ be the permutation obtained by inflating the $i$th
  entry of $\beta$ into an increasing block of length $t$ if $\varepsilon=+$, and into a decreasing
  block of length $t$ if $\varepsilon=-$.

  Define the \emph{stable non-monotone shadow}
  \[
    \Sigma(\beta,i,\varepsilon)=\bigcup_{t=1}^4
    \bigl(\Sh_4(I_t(\beta,i,\varepsilon))\setminus\{1234,4321\}\bigr).
  \]
\end{definition}

\begin{example}
  Take $\beta=123465$, $i=5$, and $\varepsilon=-$.  Then
  \[
    I_1(\beta,5,-)=123465
  \]
  has non-monotone shadow
  \[
    \Sh_4(I_1(\beta,5,-))\setminus\{1234,4321\}=\{1243\},
  \]
  whereas the stable non-monotone shadow is
  \[
    \Sigma(\beta,5,-)=\{1243,1432\}.
  \]
  The extra pattern $1432$ already appears at block size $2$, since
  \[
    I_2(\beta,5,-)=1234765
  \]
  and
  \[
    \Sh_4(I_2(\beta,5,-))\setminus\{1234,4321\}=\{1243,1432\}.
  \]
  So in this family the ordinary non-monotone shadow depends on the block size, while
  $\Sigma(\beta,5,-)$ records the eventual value.
\end{example}

For later use with the hard pair, Figure~\ref{fig:stable-shadow-examples} shows the two
block-size-$4$ families $I_4(241356,5,+)$ and $I_4(314256,5,+)$. Their stable non-monotone shadows
are
\[
  \Sigma(241356,5,+)=\{1324,2134,2314,2413,3124\},
\]
\[
  \Sigma(314256,5,+)=\{1324,2134,2314,3124,3142\}.
\]
Thus the two stable shadows differ only in whether they contain $2413$ or $3142$.  Removing that
hard pair and then recording the remaining non-monotone patterns only up to reverse-pairs gives the
same set in both cases:
\[
  \{1324/4231,\; 2134/4312,\; 2314/4132,\; 3124/4213\}.
\]
This viewpoint becomes important later in Definition~\ref{def:external-supports} and
Proposition~\ref{prop:hard-pair-families}, because for a reverse-closed forbidden set the relevant
question is which non-hard reverse pairs remain outside the distinguished hard pair
$\{2413,3142\}$.

The next lemma explains why once the inflated block has length at least $4$, the non-monotone
shadow has stabilized to $\Sigma(\beta,i,\varepsilon)$.

\begin{figure}[ht]
  \centering
  \begin{tikzpicture}[scale=0.42]
    \begin{scope}[shift={(0,0)}]
      \fill[blue!8] (4,4) rectangle (8,8);
      \draw[gray!40] (0,0) grid (9,9);
      \draw[thick] (0,0) rectangle (9,9);
      \foreach \x/\y in {1/2,2/4,3/1,4/3} {
          \fill[blue!70!black] (\x-0.5,\y-0.5) circle (3.6pt);
        }
      \foreach \x/\y in {5/5,6/6,7/7,8/8,9/9} {
          \fill[black] (\x-0.5,\y-0.5) circle (3.1pt);
        }
      \node[below] at (4.5,-0.45) {\small $241356789$};
      \node[above] at (4.5,9.35) {\small $I_4(241356,5,+)$};
      \node[blue!70!black] at (2.35,4.95) {\small $2413$};
    \end{scope}

    \begin{scope}[shift={(12,0)}]
      \fill[blue!8] (4,4) rectangle (8,8);
      \draw[gray!40] (0,0) grid (9,9);
      \draw[thick] (0,0) rectangle (9,9);
      \foreach \x/\y in {1/3,2/1,3/4,4/2} {
          \fill[blue!70!black] (\x-0.5,\y-0.5) circle (3.6pt);
        }
      \foreach \x/\y in {5/5,6/6,7/7,8/8,9/9} {
          \fill[black] (\x-0.5,\y-0.5) circle (3.1pt);
        }
      \node[below] at (4.5,-0.45) {\small $314256789$};
      \node[above] at (4.5,9.35) {\small $I_4(314256,5,+)$};
      \node[blue!70!black] at (2.35,4.95) {\small $3142$};
    \end{scope}
  \end{tikzpicture}
  \caption{Two block-size-$4$ one-entry inflation examples.  The shaded square
    marks the inflated block, and the blue points show the distinguished hard pattern: $2413$ on the
    left and $3142$ on the right.}
  \label{fig:stable-shadow-examples}
\end{figure}
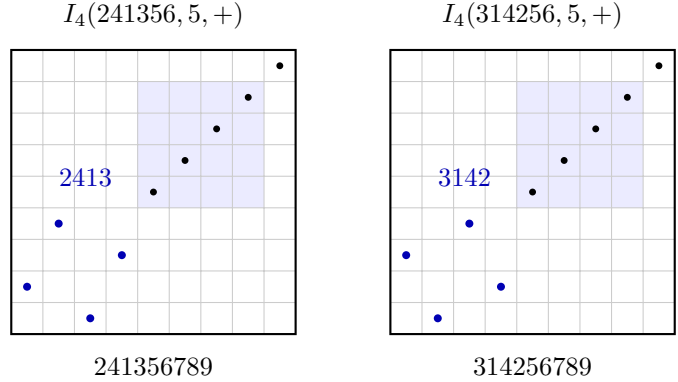

\begin{lemma}\label{lem:stable-oneblock-shadow}
  For every triple $(\beta,i,\varepsilon)$ and every $t \geq 4$,
  \[
    \Sh_4(I_t(\beta,i,\varepsilon))\setminus\{1234,4321\}=\Sigma(\beta,i,\varepsilon).
  \]
\end{lemma}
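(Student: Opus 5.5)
The plan is to show that for $t \geq 4$ the set of $4$-patterns contained in $I_t=I_t(\beta,i,\varepsilon)$ does not depend on $t$, except possibly for the monotone patterns. It then follows that the non-monotone shadow at $t$ equals the union over $t'=1,\dots,4$, which is $\Sigma(\beta,i,\varepsilon)$. We will prove two inclusions.

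\textbf{Monotonicity in $t$.} The first inclusion is that $\Sh_4(I_s)\subseteq\Sh_4(I_t)$ whenever $s\le t$. This holds because $I_s$ is obtained from $I_t$ by deleting $t-s$ entries of the inflated block, and deleting entries of a monotone block leaves a monotone block of the same type in the same position. So $I_s$ is a pattern in $I_t$, and every $4$-pattern of $I_s$ is a pattern of $I_t$. Taking $s=1,\dots,4$ and $t\ge4$ gives $\Sigma(\beta,i,\varepsilon)\subseteq \Sh_4(I_t)\setminus\{1234,4321\}$.

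\textbf{Reverse inclusion.} Let $t\ge 4$ and let $\omega$ be an occurrence of a non-monotone $4$-pattern $p$ in $I_t$. Let $b$ be the number of entries of $\omega$ that lie in the inflated block, so $b\le 4$.
\begin{itemize}
\item If $b=4$, then $\omega$ lies inside a monotone block, so $p$ is monotone. This is excluded, which is exactly why the monotone patterns are removed.
\item If $b\le 3$, keep those $b$ block entries, which are contiguous in value and position relative to everything else, and delete every other entry of the block. The result is the permutation $I_{b'}$ with $b'=\max(b,1)\le 3$, and it still contains $\omega$. The relative order of the retained entries is unchanged, since block entries all compare the same way with non-block entries, and among themselves they keep their monotone order.
\end{itemize}
So in this case $p\in\Sh_4(I_{b'})$ with $b'\le 3\le 4$, which gives $p\in\Sigma$.

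The point to state carefully is the inflation bookkeeping. For $b=0$ we take $I_1=\beta$: the single block entry is retained but $\omega$ avoids it, so we still get a pattern of $I_1$. Restricting the block to any $b'$ of its entries yields exactly $I_{b'}$ up to order-isomorphism. No step is a real obstacle; the only subtle point is that a $4$-point occurrence uses at most $4$ block entries, and the case of all $4$ gives only monotone patterns. In fact this shows that the non-monotone shadow stabilizes already at $t=3$, and the stated $t\ge4$ is a safe bound.
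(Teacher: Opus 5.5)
Your proof is correct and follows essentially the same route as the paper: an occurrence using $j$ block entries survives when the block is shrunk to size $\max(1,j)$, and the inclusion $\Sigma(\beta,i,\varepsilon)\subseteq \Sh_4(I_t(\beta,i,\varepsilon))$ follows because $I_s$ is a subpattern of $I_t$ for $s\le t$, which the paper calls immediate and you spell out. Your extra observation that $j=4$ forces a monotone pattern, so the non-monotone shadow already stabilizes at $t=3$, is a valid small sharpening.
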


\begin{proof}
  Any $4$-subsequence of $I_t(\beta,i,\varepsilon)$ uses exactly $j$ entries from the inflated block
  for some $0 \leq j \leq 4$.  Once $t \geq j$, the standardized pattern of such a subsequence depends
  only on $j$, on the sign $\varepsilon$, and on the choice of the other $4-j$ base entries.  Hence
  every non-monotone $4$-pattern that occurs for some $t \geq 4$ already occurs for block size
  $\max(1,j) \leq 4$, so it lies in $\Sigma(\beta,i,\varepsilon)$.  The converse inclusion is
  immediate from the definition of $\Sigma(\beta,i,\varepsilon)$.
\end{proof}

\begin{proposition}\label{prop:residual-nonhard-inflations}
  Let $F \subseteq \Sym_4$ be reverse-closed and contain neither monotone pattern, and let
  $p \notin B_4 \cup \{2413,3142\}$ be a non-monotone pattern.  Suppose that $p$ is legal from
  $\Av_7(F)$ and that both companion reverse pairs for $p$ from
  Proposition~\ref{prop:extended-witnesses} lie entirely in~$F$.  Then there exists a triple
  $(\beta,i,\varepsilon)$ such that
  \[
    p^r \in \Sigma(\beta,i,\varepsilon),\qquad p \notin \Sigma(\beta,i,\varepsilon),
  \]
  and every other non-monotone pattern in $\Sigma(\beta,i,\varepsilon)$ belongs to a reverse pair
  disjoint from~$F$.
\end{proposition}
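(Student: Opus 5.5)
Since the statement concerns finitely many objects, I would prove it by a structured finite verification, organized so that the search space is small and the symmetry reductions are justified by hand.

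\textbf{Step 1: reduce to reverse pairs.} A reverse-closed, monotone-free $F\subseteq\Sym_4$ is a union of reverse pairs. There are $11$ non-monotone reverse pairs:
\begin{itemize}
\item the four pairs contained in $B_4$,
\item the hard pair $\{2413,3142\}$,
\item the six remaining pairs containing the patterns $p$ under consideration.
\end{itemize}
So there are at most $2^{11}$ states $F$. The three required properties of a triple are
\[
p^r\in\Sigma,\qquad p\notin\Sigma,\qquad \text{every other non-monotone pattern of }\Sigma\text{ lies in a pair disjoint from }F.
\]
The first two do not depend on $F$, and the third is monotone decreasing in $F$: a triple that works for $F$ also works for every reverse-closed $F'\subseteq F$. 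So for each $p$ it suffices to treat the states $F$ that are maximal among those satisfying the hypotheses. These are:
\begin{itemize}
\item both companion pairs of $p$ from Proposition~\ref{prop:extended-witnesses} lie in $F$;
\item $p\notin F$;
\item some $\pi\in\Av_7(F)$ contains $p$.
\end{itemize}

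\textbf{Step 2: use symmetry.} Complement maps reverse-closed sets to reverse-closed sets, commutes with reversal, fixes $B_4$ and the hard pair, and sends one-entry inflation triples to one-entry inflation triples (reversing the sign $\varepsilon$). So one only needs to treat one $p$ from each complement orbit among the twelve non-hard, non-$B_4$ non-monotone patterns. Inverse does not preserve reverse-closedness, since $(p^r)^{-1}=(p^{-1})^c$. Because of this I would not use the full symmetry group of Table~\ref{tab:extended-witnesses} here.

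\textbf{Step 3: compute for each remaining $p$.}
\begin{enumerate}
\item Enumerate the admissible $F$ by testing legality of $p$ against the $5040$ permutations of $\Sym_7$.
\item Keep the maximal admissible $F$.
\item For each such $F$, search the $720\cdot 6\cdot 2$ triples $(\beta,i,\varepsilon)$, computing $\Sigma(\beta,i,\varepsilon)$ from block sizes $t\le 4$. By Lemma~\ref{lem:stable-oneblock-shadow} this equals the eventual non-monotone shadow.
\item Record a witnessing triple.
\end{enumerate}
To make the argument readable, I would try first to find one triple per $p$ that works uniformly for every admissible $F$. A natural candidate is an inflation whose $\Sigma$ consists of $p^r$ together only with patterns from pairs that legality of $p$ forces to be absent from $F$. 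If that fails, I would list a few triples per case in a table, as in Table~\ref{tab:extended-witnesses}, with a one-line check of each $\Sigma$.

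\textbf{Main obstacle.} The hard part is the interaction between the legality hypothesis at $n=7$ and the pairs $F$ may contain besides the companion pairs. I do not see a conceptual way to turn ``some $\pi\in\Av_7(F)$ contains $p$'' into a specific allowed pair. So this step is where I expect to rely on the exhaustive computation, cross-checked against the reproducibility scripts of Section~\ref{sec:repro}, rather than on a hand argument.
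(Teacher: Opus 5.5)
Your proposal is correct and takes the same approach as the paper. The paper's proof is also an exhaustive finite computation: the script enumerates all $1156$ admissible pairs $(F,p)$ ($297$ up to symmetry) and exhibits a triple $(\beta,i,\varepsilon)$ for each. Your two extra reductions are sound and only shrink the search. Passing to inclusion-maximal $F$ is valid because the third condition only weakens as $F$ shrinks, while legality of $p$ passes to reverse-closed subsets. Reducing to complement-orbit representatives is valid because complement preserves reverse-closedness, $B_4$, the hard pair, the companion assignment and the inflation triples.
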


\begin{proof}
  Since $F$ is reverse-closed, we may identify it with the set of non-monotone reverse pairs that it
  contains.  The statement is then a finite  computation.  The script
  \path{code-permuta/k4_reverse_reply_permuta.py} in the public repository~\cite{game-patterns-repo}
  enumerates all pairs $(F,p)$ satisfying the hypotheses; there are
  $1156$ ($297$ if we reduce them by symmetries) such pairs.  For each of them, it finds a triple $(\beta,i,\varepsilon)$,
  with $\beta \in \Sym_6$, whose stable
  non-monotone shadow has the required properties.  See Section~\ref{sec:repro} for details on reproducibility.
\end{proof}

\begin{lemma}\label{lem:reverse-reply-legality}
  For $k=4$ and $n \geq 9$, whenever $F \subseteq \Sym_4$ is reverse-closed, contains neither
  monotone pattern, and $p \notin F$ is a non-monotone legal pattern with
  $p \notin \{2413,3142\}$, the reverse pattern $p^r$ is legal from $\Av_n(F \cup \{p\})$.
\end{lemma}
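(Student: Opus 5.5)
Fix $n \geq 9$, a reverse-closed monotone-free $F$, and a legal non-monotone $p \notin F \cup \{2413,3142\}$. I will split into the three cases of the roadmap. In each I exhibit an explicit permutation $\pi \in \Sym_n$ whose non-monotone $4$-shadow contains $p^r$, does not contain $p$, and is otherwise disjoint from $F$. Since $F$ contains no monotone pattern, such a $\pi$ lies in $\Av_n(F \cup \{p\})$ and contains $p^r$, so $p^r$ is legal.

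\textbf{Case $p \in B_4$.} Lemma~\ref{lem:b4-cross-compatible} applies directly. The singleton witness for $p^r$ from Proposition~\ref{prop:witness-families} has shadow $\{p^r, m'\}$ with $m'$ monotone. Because $F$ is reverse-closed and $p \notin F$, we also have $p^r \notin F$. Hence this witness avoids $F \cup \{p\}$ and contains $p^r$.

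\textbf{Case $p \notin B_4$, with some companion reverse pair disjoint from $F$.} Proposition~\ref{prop:extended-witnesses} gives companions $q_1,q_2 \in B_4$ from distinct reverse pairs. Suppose one of them, say $q_i$, has $\{q_i,q_i^r\} \cap F = \varnothing$. I use the reversed family $E_n(p,q_i)^r$. Its non-monotone shadow is exactly $\{p^r, q_i^r\}$, and it avoids $p$ because $E_n(p,q_i)$ avoids $p^r$. Both $p^r$ and $q_i^r$ lie outside $F$, so this family witnesses legality for every $n \geq 7$, as in the worked example with $F=\{1432,2341\}$.

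\textbf{Case $p \notin B_4$, with both companion pairs inside $F$.} Here I invoke Proposition~\ref{prop:residual-nonhard-inflations}. Its hypothesis needs $p$ legal from $\Av_7(F)$, whereas we only know $p$ is legal at length $n$. I bridge this by heredity: if $\pi \in \Av_n(F)$ contains $p$, take an occurrence of $p$ and add $3$ further entries of $\pi$; the resulting subpermutation lies in $\Av_7(F)$ and still contains $p$. The proposition then supplies a triple $(\beta,i,\varepsilon)$. I take $t = n-5 \geq 4$, so that $I_t(\beta,i,\varepsilon)$ has length $n$. By Lemma~\ref{lem:stable-oneblock-shadow} its non-monotone shadow equals $\Sigma(\beta,i,\varepsilon)$. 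That set contains $p^r$, omits $p$, and has every other element in a reverse pair disjoint from $F$. So $I_{n-5}(\beta,i,\varepsilon)$ is the required witness. The condition $t = n-5 \geq 4$ is exactly where $n \geq 9$ enters.

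\textbf{Main obstacle.} The third case carries all the difficulty, and it rests on the finite computation behind Proposition~\ref{prop:residual-nonhard-inflations}; the other two cases are formal. Beyond that, the points I would check are the downward-closure step from $n$ to $7$ and the length bookkeeping, namely that a one-entry inflation of $\beta \in \Sym_6$ reaches every length $n \geq 9$ with block size at least $4$.
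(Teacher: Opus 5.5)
Your proposal is correct and follows the paper's proof essentially step for step. It uses the same three cases: singleton $B_4$ witnesses via Lemma~\ref{lem:b4-cross-compatible}; the reversed extended witness family $E_n(p,q_i)^r$ when some companion pair is disjoint from $F$; and otherwise the heredity reduction to $\Av_7(F)$, followed by the inflation $I_{n-5}(\beta,i,\varepsilon)$ from Proposition~\ref{prop:residual-nonhard-inflations} and Lemma~\ref{lem:stable-oneblock-shadow}, with $t=n-5\geq 4$ being exactly where $n\geq 9$ is used.
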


\begin{proof}
  We split into two cases.

  \medskip\noindent
  \textbf{Case~1: $p \in B_4$.}
  By Lemma~\ref{lem:b4-cross-compatible}, the witness family for $p^r$ has
  $\Sh_4 = \{p^r, m\}$ where $m$ is monotone.  Since $p \neq p^r$ and $p \neq m$,
  this family avoids $p$.  Furthermore, since $p^r \notin F$ (by reverse-closure of $F$) and
  $m \notin F$, the family lies in
  $\Av_n(F \cup \{p\})$ and witnesses the legality of $p^r$.

  \medskip\noindent
  \textbf{Case~2: $p \notin B_4$, $p \neq 2413,3142$.}
  By Proposition~\ref{prop:extended-witnesses}, each such $p$ has two companions
  $q_1, q_2 \in B_4$ from distinct reverse pairs
  (listed up to symmetry in Table~\ref{tab:extended-witnesses}).  If at least one companion $q_i$
  satisfies $q_i \notin F$,
  then the reversed family $E_n(p,q_i){}^r$ has
  $\Sh_4 \setminus \{1234,4321\} = \{p^r, q_i^r\}$ and avoids $p$.
  Since $p^r \notin F$ (reverse-closure), $q_i^r \notin F$ (because $q_i \notin F$
  and reverse-closure), and $F$ contains no monotone pattern, this family lies in
  $\Av_n(F \cup \{p\})$ and witnesses the legality of $p^r$.

  Otherwise both companion reverse pairs lie entirely in $F$.  Choose
  $\pi \in \Av_n(F)$ containing $p$.  Restricting to the entries of one occurrence of $p$ together
  with any three additional entries of $\pi$ shows that $p$ is legal from $\Av_7(F)$.  Hence
  Proposition~\ref{prop:residual-nonhard-inflations} gives a triple
  $(\beta,i,\varepsilon)$ such that $p^r \in \Sigma(\beta,i,\varepsilon)$ and
  $p \notin \Sigma(\beta,i,\varepsilon)$.  Moreover, every other non-monotone pattern in
  $\Sigma(\beta,i,\varepsilon)$ belongs to a reverse pair disjoint from $F$.
  Taking block size $t=n-5 \geq 4$, Lemma~\ref{lem:stable-oneblock-shadow} gives a length-$n$
  permutation $I_t(\beta,i,\varepsilon)$ whose non-monotone $4$-shadow is exactly
  $\Sigma(\beta,i,\varepsilon)$.  This permutation therefore avoids $F \cup \{p\}$ and contains
  $p^r$, so $p^r$ is legal from $\Av_n(F \cup \{p\})$.
\end{proof}

Finally, we handle the hard pair $\{2413,3142\}$.

\begin{definition}\label{def:external-supports}
  For $p \in \{2413,3142\}$, let
  \[
    \mathcal{E}_p=
    \left\{
    \begin{array}{l}
      \Sigma(\beta,i,\varepsilon)\setminus\{2413,3142\}:           \\
      \beta\in \Sym_6,\ i\in\{1,\dots,6\},\ \varepsilon\in\{+,-\}, \\
      p \in \Sigma(\beta,i,\varepsilon),\ p^r \notin \Sigma(\beta,i,\varepsilon)
    \end{array}
    \right\},
  \]
  where each set is recorded only up to reverse-pairs.  In other words, $\mathcal{E}_p$ is the set
  of reverse-pair sets obtained by taking all one-entry inflation families
  $I_t(\beta,i,\varepsilon)$ with $\beta\in\Sym_6$, keeping those whose stable shadow contains $p$
  but not $p^r$, and then deleting the hard pair $\{2413,3142\}$.  We call $\mathcal{E}_p$ the
  \emph{collection of external reverse-pair supports} for~$p$.
\end{definition}

\begin{proposition}\label{prop:hard-pair-families}
  The two collections $\mathcal{E}_{2413}$ and $\mathcal{E}_{3142}$ are equal, and each has size
  $138$.  Moreover, if $F \subseteq \Sym_4$ is reverse-closed, contains neither monotone pattern, and
  $2413$ is legal from $\Av_7(F)$, then the reverse-pair set corresponding to $F$ is disjoint from
  some support in $\mathcal{E}_{2413}$.  By reversal, the analogous statement also holds with $2413$
  and $3142$ interchanged.
\end{proposition}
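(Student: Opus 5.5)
The plan is a finite, computer-assisted verification, organised so that the only non-routine part is the enumeration, with a symmetry argument doing the rest.

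\textbf{Step 1: equality of the two collections.} Reversal maps a one-entry inflation family to another one: $I_t(\beta,i,\varepsilon)^r = I_t(\beta^r,7-i,-\varepsilon)$. Since taking shadows commutes with reversal, we get $\Sigma(\beta^r,7-i,-\varepsilon)=\Sigma(\beta,i,\varepsilon)^r$. Hence the triples whose stable shadow contains $2413$ but not $3142$ correspond bijectively to the triples whose stable shadow contains $3142$ but not $2413$. Under this bijection, the support $\Sigma\setminus\{2413,3142\}$ is replaced by its reverse. Because supports are recorded only up to reverse pairs, the reverse of a support is the same support. Therefore $\mathcal{E}_{2413}=\mathcal{E}_{3142}$.

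The same symmetry gives the final sentence of the proposition. Since $F$ is reverse-closed, $3142=2413^r$ is legal from $\Av_7(F)$ exactly when $2413$ is, by Lemma~\ref{lem:reverse-closure-symmetry}. The conclusion then transfers verbatim, because the two collections are equal.

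\textbf{Step 2: the count $138$.} For each of the $720\cdot 6\cdot 2$ triples $(\beta,i,\varepsilon)$, compute $\Sigma(\beta,i,\varepsilon)$ as the union of the non-monotone $4$-shadows of $I_t$ for $t\le 4$. By Lemma~\ref{lem:stable-oneblock-shadow}, these are permutations of length at most $9$. Keep the triples with $2413\in\Sigma$ and $3142\notin\Sigma$, delete the hard pair, project to reverse pairs, and deduplicate. This is exhaustive enumeration with the repository script of Section~\ref{sec:repro}, so the value $138$ is certified by computation.

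\textbf{Step 3: the covering statement.} Identify a reverse-closed, monotone-free $F$ with a subset of the $10$ non-monotone reverse pairs of $\Sym_4$, so there are at most $2^{10}$ such sets.
\begin{enumerate}
\item For each such $F$ not containing the hard pair, decide whether $2413$ is legal from $\Av_7(F)$ by scanning $\Sym_7$.
\item For each $F$ where $2413$ is legal, search $\mathcal{E}_{2413}$ for a support disjoint from $F$.
\end{enumerate}
This is again finite. A convenient reduction is to quotient by the symmetries preserving the hard pair and reverse-closure, such as complement and inverse, which send $2413$ to $3142$ or fix it. That keeps the check small, as in Proposition~\ref{prop:residual-nonhard-inflations}.

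\textbf{Main obstacle.} The hard part is Step 3: there is no structural reason why legality at length $7$ should imply the existence of a one-entry inflation of a length-$6$ base that works. I would first try a conceptual shortcut. Take a permutation of $\Av_7(F)$ containing $2413$, delete a suitable entry to get a base $\beta$, and inflate an entry lying outside the occurrence. The snag is that inflation can create new patterns, so the stable shadow may meet $F$. For that reason I would rely on the exhaustive check, reporting the number of $(F,2413)$ cases and that every one succeeds.
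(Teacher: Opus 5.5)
Your proposal is correct and takes the same approach as the paper. The reversal bijection $I_t(\beta,i,\varepsilon)^r=I_t(\beta^r,7-i,-\varepsilon)$, with supports recorded up to reverse pairs, gives $\mathcal{E}_{2413}=\mathcal{E}_{3142}$; the count $138$ and the covering statement (over the $543$ reverse-closed monotone-free states where $2413$ is legal at $n=7$) are exhaustive computations, and the interchanged statement follows by reversal.

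Two small corrections, neither affecting the main argument:
\begin{enumerate}
\item Your optional symmetry reduction should not use inverse. Since $(p^r)^{-1}=(p^{-1})^c$, inverse sends reverse-closed sets to complement-closed ones. Only reverse, complement and their composite may be used to quotient.
\item There are $11$ non-monotone reverse pairs in $\Sym_4$, not $10$; $10$ remain once the hard pair is excluded.
\end{enumerate}
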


\begin{proof}
  Reversal sends $2413$ to $3142$, and for every one-entry inflation family
  $I_t(\beta,i,\varepsilon)$ its reverse is again a one-entry inflation family:
  \[
    I_t(\beta,i,+)^r = I_t(\beta^r,7-i,-),
    \qquad
    I_t(\beta,i,-)^r = I_t(\beta^r,7-i,+),
  \]
  so the definition of $\mathcal{E}_p$ is closed under reversal.  Moreover,
  taking shadows commutes with reversal, and the external supports are recorded only up to
  reverse-pairs.  Therefore reversal gives a bijection from the families in
  $\mathcal{E}_{2413}$ to those in $\mathcal{E}_{3142}$, so
  \[
    \mathcal{E}_{2413}=\mathcal{E}_{3142}.
  \]

  Since $F$ is reverse-closed, we identify it with the set of non-monotone reverse pairs that it
  contains.  The remaining assertions are finite exact computations.  The script
  \path{code-permuta/k4_reverse_reply_permuta.py} in the public repository~\cite{game-patterns-repo}
  enumerates all $6!\cdot 6 \cdot 2$ one-entry inflation families,
  computes their stable shadows $\Sigma(\beta,i,\varepsilon)$, and records the distinct external
  reverse-pair supports for those containing $2413$ but not $3142$.  It finds $138$ such supports,
  and hence also $138$ supports in $\mathcal{E}_{3142}$ by the equality above.

  The same script then enumerates all reverse-closed monotone-free sets $F \subseteq \Sym_4$ with
  $2413,3142 \notin F$ for which $2413$ is legal at $n=7$; there are $543$ such states.  For each of
  them, at least one member of $\mathcal{E}_{2413}$ is disjoint from $F$.  The final symmetry
  statement follows by reversing every surviving permutation.  See Section~\ref{sec:repro}.
\end{proof}

\begin{theorem}\label{thm:k4}
  For every $n \geq 10$, the move-by-move reverse strategy is a winning strategy for Player~II in PAP
  on $\Sym_n$ with $k=4$.  This implies,
  \[
    \sg(\Sym_n,4)=0 \qquad \text{for } n \geq 10.
  \]
\end{theorem}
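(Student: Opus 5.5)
We will apply Corollary~\ref{cor:conditional-reverse} with $k=4$. Its threshold is $(k-1)^2+1=10$, which is exactly why the theorem starts at $n\ge 10$: for such $n$, Proposition~\ref{prop:monotone-trap} handles the monotone endgame. After one monotone pattern is chosen from a reverse-closed, monotone-free state, the other is legal, and choosing it leaves no legal moves. It therefore remains to verify the hypothesis of the corollary. Let $n\ge 10$, let $F\subseteq \Sym_4$ be reverse-closed and contain neither $1234$ nor $4321$, and let $p\notin F$ be a non-monotone pattern that is legal from $F$ at length $n$. We must show that $p^r$ is legal from $F\cup\{p\}$ at length $n$.

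We split according to the reverse pair of $p$. If $p\notin\{2413,3142\}$, this is exactly Lemma~\ref{lem:reverse-reply-legality}, which applies since $n\ge 10\ge 9$. That lemma covers both $p\in B_4$ (via the singleton witness families of Lemma~\ref{lem:b4-cross-compatible}) and the six other non-hard pairs (via the extended witness families of Proposition~\ref{prop:extended-witnesses}, or via the residual inflation families of Proposition~\ref{prop:residual-nonhard-inflations}).

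The main case is the hard pair. By the symmetry statement in Proposition~\ref{prop:hard-pair-families}, it suffices to treat $p=2413$. We first reduce legality at length $n$ to legality at length $7$, exactly as in the proof of Lemma~\ref{lem:reverse-reply-legality}. Take $\pi\in\Av_n(F)$ containing $2413$, and restrict $\pi$ to one occurrence together with three further entries. The result lies in $\Av_7(F)$, since avoidance is inherited by subpermutations, and it contains $2413$.

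Proposition~\ref{prop:hard-pair-families} then gives a support $S\in\mathcal{E}_{2413}=\mathcal{E}_{3142}$ that is disjoint from the reverse-pair set of $F$. Because the two collections are equal, $S$ also arises from some triple $(\beta,i,\varepsilon)$ with $3142\in\Sigma(\beta,i,\varepsilon)$ and $2413\notin\Sigma(\beta,i,\varepsilon)$, and with
\[
\Sigma(\beta,i,\varepsilon)\setminus\{2413,3142\}=S
\]
as reverse pairs. We take the inflation $I_t(\beta,i,\varepsilon)$ with $t=n-5\ge 5\ge 4$. By Lemma~\ref{lem:stable-oneblock-shadow}, its non-monotone $4$-shadow is exactly $\Sigma(\beta,i,\varepsilon)$. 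This shadow contains $3142=p^r$ and does not contain $p$. Every other non-monotone pattern in it lies in a reverse pair outside $F$, and $3142\notin F$ by reverse-closure. The shadow also contains no monotone pattern of $F$, since $F$ has none. So this permutation lies in $\Av_n(F\cup\{2413\})$ and contains $3142$, which is what we need.

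With the hypothesis of Corollary~\ref{cor:conditional-reverse} verified, the corollary shows that the reverse strategy wins, so $\sg(\Sym_n,4)=0$. The only delicate point is the hard-pair case: we must use the equality $\mathcal{E}_{2413}=\mathcal{E}_{3142}$ to turn a support witnessing $2413$ into a family witnessing $3142$ that avoids $2413$. The rest is bookkeeping with the earlier lemmas.
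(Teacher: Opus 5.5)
Your proposal is correct and follows the paper's proof essentially step for step. Lemma~\ref{lem:reverse-reply-legality} handles the non-hard pairs. The hard pair is handled by passing to a $7$-point subpermutation, invoking Proposition~\ref{prop:hard-pair-families}, transferring the support through $\mathcal{E}_{2413}=\mathcal{E}_{3142}$, and inflating with $t=n-5$ via Lemma~\ref{lem:stable-oneblock-shadow}, after which Corollary~\ref{cor:conditional-reverse} finishes the argument.
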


\begin{example}
  Suppose $F \subseteq \Sym_4$ is reverse-closed, contains neither monotone pattern, and omits the
  reverse pairs
  \[
    \{1324/4231,\; 2134/4312,\; 2314/4132,\; 3124/4213\}
  \]
  and let $n=10$.  Then
  \[
    I_5(314256,5,+)=(3,1,4,2,5,6,7,8,9,10)
  \]
  lies in $\Av_{10}(F\cup\{2413\})$ and contains $3142$.  The proof of
  Theorem~\ref{thm:k4} shows that every legal hard-pair state at $n \geq 10$ admits some witness of
  this form, after first passing to a $7$-point subpermutation and then choosing an appropriate stable
  support from Proposition~\ref{prop:hard-pair-families}.
\end{example}

\begin{proof}
  By Lemma~\ref{lem:reverse-reply-legality}, only the pair $\{2413,3142\}$ remains.  Let
  $F \subseteq \Sym_4$ be reverse-closed, contain neither monotone pattern, and suppose that $2413$ is
  legal from $\Av_n(F)$, where $n \geq 10$.

  Choose $\pi \in \Av_n(F)$ containing $2413$, and select a $4$-point occurrence of $2413$ in $\pi$.
  Together with any three additional entries of $\pi$, these points form a $7$-point subpermutation
  still avoiding $F$ and still containing $2413$.  Thus $2413$ is legal from $\Av_7(F)$.

  By Proposition~\ref{prop:hard-pair-families}, the reverse-pair set of $F$ is disjoint from some
  support $E \in \mathcal{E}_{2413}$.  Since $\mathcal{E}_{2413}=\mathcal{E}_{3142}$, there is a
  triple $(\beta,i,\varepsilon)$ whose stable shadow contains $3142$, avoids $2413$, and has external
  support $E$.  Taking block size $t=n-5 \geq 5$, Lemma~\ref{lem:stable-oneblock-shadow} gives a
  length-$n$ permutation
  $I_t(\beta,i,\varepsilon)$ whose non-monotone $4$-shadow consists of $3142$ together with patterns
  belonging to the reverse pairs in $E$, and hence avoids $F \cup \{2413\}$.  So $3142$ is legal after
  $2413$ is chosen.  The case with $2413$ and $3142$ interchanged is symmetric.

  Corollary~\ref{cor:conditional-reverse} now applies.
\end{proof}

\subsection{Toward general \texorpdfstring{$k$}{k}}

The $k=4$ proof suggests a plausible approach to $k=5$.
First one reduces, via Corollary~\ref{cor:conditional-reverse}, to reverse-closed
monotone-free states.  Next the four reverse pairs inside $B_k$ are handled uniformly by the
singleton-shadow witness families.  One would then try to treat the remaining patterns by bounded witness
families whose non-monotone shadows involve only a small amount of extra support, and finally one
isolates and analyzes the residual hard reverse pairs that survive those first passes.

The combinatorics of the last step, however, should become substantially more complicated as $k$
grows.  There are
\[
  \frac{k!-2}{2}
\]
non-monotone reverse pairs in total, and after removing the four reverse pairs coming from $B_k$
there remain
\[
  \frac{k!-10}{2}
\]
non-$B_k$ reverse pairs that could potentially be hard.  For $k=4$ this number is $7$, and the
explicit companion families reduce it all the way down to the single pair $\{2413,3142\}$.  For
$k=5$ the corresponding number is already $55$.  The present computations show that the simplest
one-companion construction works for $14$ of these, but this still leaves
many unresolved cases, so one should not expect a single hard pair to remain.

This suggests that an extension of the $k=4$ proof to $k=5$ should still be possible, but
it is likely to be more nuanced and case-based, with many more exceptional configurations to treat.
For general $k$, however, one should probably not expect a direct iteration of the $k=4$ argument to
be enough: a complete proof of Conjecture~\ref{conj:general-reverse} will likely require a new idea
beyond the present witness-family analysis.

One potential general approach is trying to use the fact that the hard pair for $k=4$
consists of exactly the simple permutations of length $4$, so perhaps one can use the
theory of simple permutations to push to general $k$.

\section{Further bounds on \texorpdfstring{$N_k$}{Nk}}\label{sec:further-nk}

We now return to the threshold $N_k$.  Theorem~\ref{thm:quadratic-bound} gave the bound
$N_k \leq (3k-8)^2+1$.  We can improve this upper bound using more of the patterns in $B_k$.

\begin{proposition}\label{prop:intermediate-lis-lds}
  Let $k \geq 4$, and let $\pi \in \Av(B_k)$.  If $\mathrm{LIS}(\pi) \geq 2k-4$, then
  \[
    \mathrm{LDS}(\pi)\leq k-2.
  \]
  By symmetry, if $\mathrm{LDS}(\pi) \geq 2k-4$, then $\mathrm{LIS}(\pi)\leq k-2$.
\end{proposition}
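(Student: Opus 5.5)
The plan is to reuse the cell analysis from the proof of Theorem~\ref{thm:quadratic-bound}, sharpened by the extra length $2k-4$ of the increasing subsequence. That analysis confines every point outside a longest increasing subsequence to two small corner regions. A decreasing subsequence of length $k-1$ can then be completed by the bottom-left or top-right point of $M$ to an occurrence of $q_k$ or of $s_k^c$.

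Fix $\pi\in\Av(B_k)$ and let $M=(m_1<\dots<m_r)$ be a longest increasing subsequence, with $r\ge 2k-4$. For $x\notin M$ define its cell $(i,j)$ as before. A diagonal cell $i=j$ is impossible by maximality of $M$.

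\textbf{Points above the diagonal.} If $i<j$, the three arguments in the proof of Theorem~\ref{thm:quadratic-bound} force
\[
j-i\le k-2,\qquad r-j\le k-3,\qquad i\le k-3.
\]
With $r\ge 2k-4$ these give
\[
j\ge r-k+3\ge k-1,\qquad i\ge j-k+2\ge 1,\qquad j\le i+k-2\le 2k-5<r.
\]
So every point above the diagonal satisfies $1\le i$ and $j\le r-1$.

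\textbf{Points below the diagonal.} I expect $B_k$ to be closed under inverse: $p_k,q_k,r_k$ are involutions, and $s_k^{-1}=q_k^c$. I will check this explicitly, or else redo the three insertions using the cells $(1,0)$, $(k-1,0)$ and $(k-1,k-2)$ of Lemma~\ref{lem:dangerous-cells}. Either way, the transposed inequalities hold for points with $i>j$:
\[
i-j\le k-2,\qquad r-i\le k-3,\qquad j\le k-3.
\]
These give $i\ge k-1\ge 1$ and $j\le k-3<r$.

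\textbf{Excluding the corner cells.} The only way a non-$M$ point can fail to lie strictly right of and above $m_1$ is to sit in a cell with $i=0$ or $j=0$; by the above this means a below-diagonal point with $j=0$. Symmetrically, failing to lie strictly left of and below $m_r$ means a below-diagonal point with $i=r$. A point in cell $(r,0)$ is impossible: choosing any $k-1$ points of $M$, it lies in cell $(k-1,0)$ relative to them, giving $s_k\in B_k$ by Lemma~\ref{lem:dangerous-cells}.

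\textbf{The decreasing subsequence.} Suppose, for contradiction, that $\pi$ has a decreasing subsequence $D$ of length $k-1$. It contains at most one point of $M$.
\begin{itemize}
\item If every point of $D$ lies strictly right of and above $m_1$, then $m_1$ followed by $D$ is an occurrence of $1\,k\,(k-1)\cdots 2=q_k$.
\item Otherwise, if every point of $D$ lies strictly left of and below $m_r$, then $D$ followed by $m_r$ is an occurrence of $(k-1)\cdots 1\,k=s_k^c$.
\end{itemize}
In both cases we get a contradiction, so it remains to show that one of the two cases always applies.

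\textbf{Main obstacle.} The hard step is this last bookkeeping: showing that $D$ cannot simultaneously contain a point blocking $m_1$ and a point blocking $m_r$. Because $D$ is decreasing, a point with $j=0$ is below everything else in $D$, so it is the last (rightmost) point of $D$. A point with $i=r$ is right of everything else, so it is also the last point of $D$. If both kinds occur they are the same point, which lies in cell $(r,0)$ and is excluded above.

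The point of $M$ itself needs separate care. The point $m_1$ lies in $D$ only as its last element; then no $j=0$ point can follow it, so the $m_r$ case applies. Symmetrically, $m_r$ in $D$ is its first element, and the $m_1$ case applies.

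I will verify these boundary cases carefully. Once done, $\mathrm{LDS}(\pi)\le k-2$ follows. The second statement is the complement of the first, since $B_k$ is closed under complement.
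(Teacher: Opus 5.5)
Your argument is correct in substance. It ends exactly as the paper's does: prepend $m_1$ to $D$ to get $q_k$. But it reaches ``$m_1$ is south-west of every other point'' by a longer route.

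\textbf{The paper's route.} The paper never uses the diagonal analysis of Theorem~\ref{thm:quadratic-bound}. It only rules out $i=0$:
\begin{itemize}
\item cell $(0,0)$ by maximality of $M$;
\item cell $(0,1)$ when $1\le j\le k-2$, which is where $r\ge 2k-4$ enters, through $r-j\ge k-2$;
\item cell $(0,k-1)$ when $j\ge k-1$.
\end{itemize}
It then rules out $j=0$ via the transposed cells $(1,0)$ and $(k-1,0)$.

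\textbf{Your route.} You import the full six-cell localization, which additionally needs the $p_k$ cells, and then you do not extract everything it gives. Below the diagonal, your inequalities also yield
\[
j\ge i-(k-2)\ge 1 \qquad\text{and}\qquad i\le j+k-2\le 2k-5<r .
\]
So every point outside $M$ has $1\le i,j\le r-1$. Consequently $m_1$ is south-west of everything (and $m_r$ north-east), $m_1$ lies in no decreasing pair, and your first bullet always applies. The $(r,0)$ exclusion, the $s_k^c$ case, and the whole ``main obstacle'' paragraph are unnecessary. Your guess that $B_k$ is closed under inverse is correct, for example $s_k^{-1}=q_k^c$.

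\textbf{Faulty steps in the redundant paragraph.} You should delete that paragraph rather than repair it, because two of its justifications are wrong as stated.
\begin{itemize}
\item ``A point with $j=0$ is below everything else in $D$'' fails when $D$ contains two such points. What is true is that the last point of $D$ is both its lowest and its rightmost point. It therefore inherits $j=0$ and $i=r$ from any such points and lands in $(r,0)$.
\item ``$m_1$ lies in $D$ only as its last element'' is backwards. By what you proved, no point has $i=0$, so nothing can precede $m_1$ in $D$. Hence $m_1$ could only be the first element, followed by points with $j=0$. The $m_r$ case does still apply, but the reason is that a later point with $i=r$ would sit in $(r,0)$, not the reason you gave.
\end{itemize}
Once you use $j\ge 1$ from your own inequalities, both issues disappear.
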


\begin{proof}
  Let $M=(m_1<\cdots<m_r)$ be a longest increasing subsequence of $\pi$, where $r \geq 2k-4$.  For a
  point $x$ not in $M$, define its cell $(i,j)$ as in the proof of
  Theorem~\ref{thm:quadratic-bound}.

  We claim that every such point satisfies $i \geq 1$ and $j \geq 1$.

  If $i=0$ and $j=0$, then $x$ together with $M$ forms an increasing subsequence of length $r+1$,
  contradicting the maximality of $M$.

  If $i=0$ and $1 \leq j \leq k-2$, then $r-j \geq k-2$, so choosing one point of $M$ below $x$ and
  $k-2$ points above $x$ produces the cell $(0,1)$ and hence a pattern from $B_k$ by
  Lemma~\ref{lem:dangerous-cells}.

  If $i=0$ and $j \geq k-1$, then choosing $k-1$ points of $M$ below $x$ produces the cell $(0,k-1)$
  and again a pattern from $B_k$ by Lemma~\ref{lem:dangerous-cells}.

  Therefore $i \neq 0$.  By symmetry, the cases $(1,0)$ and $(k-1,0)$ from
  Lemma~\ref{lem:dangerous-cells} show that $j \neq 0$ as well.

  So $m_1$ lies to the left of and below every other point of $\pi$.  Now suppose that $\pi$ has a
  decreasing subsequence of length $k-1$.  Since $m_1$ is the least point of $\pi$, it cannot belong
  to a decreasing subsequence of length greater than $1$.  Prepending $m_1$ to such a subsequence
  therefore produces the pattern
  \[
    q_k=1\,k\,(k-1)\cdots 2,
  \]
  which belongs to $B_k$, a contradiction.
\end{proof}

\begin{corollary}\label{cor:better-quadratic-bound}
  Let $k \geq 4$.  Then
  \[
    \Av_n(B_k)=\{\id_n,\id_n^r\}\qquad\text{for all }n \geq (2k-5)^2+1.
  \]
  Equivalently,
  \[
    N_k \leq (2k-5)^2+1.
  \]
\end{corollary}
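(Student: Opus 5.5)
The plan is to combine Proposition~\ref{prop:intermediate-lis-lds} with the first part of Theorem~\ref{thm:quadratic-bound} and the Erd\H{o}s--Szekeres theorem.

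Let $n \geq (2k-5)^2+1$ and let $\pi \in \Av_n(B_k)$ be non-monotone. The goal is to show that both $\mathrm{LIS}(\pi)$ and $\mathrm{LDS}(\pi)$ are at most $2k-5$. Once that is known, Erd\H{o}s--Szekeres gives a contradiction, since every permutation of length at least $(2k-5)^2+1$ has a monotone subsequence of length $2k-4$. The monotone permutations $\id_n$ and $\id_n^r$ trivially avoid $B_k$, because every pattern in $B_k$ is non-monotone, so they always survive and the claimed equality follows.

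Suppose instead that $\mathrm{LIS}(\pi) \geq 2k-4$; the decreasing case is symmetric under complement. Theorem~\ref{thm:quadratic-bound} already shows that $\mathrm{LIS}(\pi) \leq 3k-8$, since $\pi$ is non-monotone. Proposition~\ref{prop:intermediate-lis-lds} gives $\mathrm{LDS}(\pi) \leq k-2$. Erd\H{o}s--Szekeres, in its quantitative form $n \leq \mathrm{LIS}(\pi)\cdot\mathrm{LDS}(\pi)$, then yields
\[
  n \leq (3k-8)(k-2).
\]
It remains to compare this with $(2k-5)^2+1 = 4k^2-20k+26$. We have $(3k-8)(k-2) = 3k^2-14k+16$, and the difference is
\[
  (2k-5)^2+1-(3k-8)(k-2) = k^2-6k+10 = (k-3)^2+1 > 0.
\]
So $n > \mathrm{LIS}\cdot\mathrm{LDS}$, which is a contradiction. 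Hence $\mathrm{LIS}(\pi) \leq 2k-5$, and symmetrically $\mathrm{LDS}(\pi) \leq 2k-5$. Applying Erd\H{o}s--Szekeres once more, now to lengths at least $(2k-5)^2+1$, finishes the proof.

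The only real step is this bookkeeping. The quadratic inequality is the point to check: it holds for all $k$, so the case $k=4$ needs no separate treatment.
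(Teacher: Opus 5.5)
Your proof is correct and is essentially the paper's argument: you combine $\mathrm{LIS},\mathrm{LDS}\le 3k-8$ from Theorem~\ref{thm:quadratic-bound} with Proposition~\ref{prop:intermediate-lis-lds} and the product form of Erd\H{o}s--Szekeres to get $n\le (3k-8)(k-2)$, then compare with $(2k-5)^2$ using the $(k-3)^2$ gap. The only difference is the order of steps: you first rule out $\mathrm{LIS}\ge 2k-4$ and $\mathrm{LDS}\ge 2k-4$ and then apply Erd\H{o}s--Szekeres, whereas the paper applies Erd\H{o}s--Szekeres first and then splits into cases. You also state explicitly that $\id_n$ and $\id_n^r$ avoid $B_k$, which the paper leaves implicit.
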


\begin{proof}
  Let $\pi$ be a non-monotone permutation avoiding $B_k$.  By Theorem~\ref{thm:quadratic-bound},
  \[
    \mathrm{LIS}(\pi)\leq 3k-8,\qquad \mathrm{LDS}(\pi)\leq 3k-8.
  \]

  If $|\pi|>(2k-5)^2$, then the Erd\H{o}s--Szekeres theorem gives either
  \[
    \mathrm{LIS}(\pi)\geq 2k-4
  \]
  or
  \[
    \mathrm{LDS}(\pi)\geq 2k-4.
  \]

  In the first case, Proposition~\ref{prop:intermediate-lis-lds} gives
  \[
    \mathrm{LDS}(\pi)\leq k-2,
  \]
  so
  \[
    |\pi|\leq (3k-8)(k-2).
  \]

  In the second case, the symmetric statement in Proposition~\ref{prop:intermediate-lis-lds} gives
  \[
    \mathrm{LIS}(\pi)\leq k-2,
  \]
  and again
  \[
    |\pi|\leq (3k-8)(k-2).
  \]

  But for $k \geq 4$ we have
  \[
    (3k-8)(k-2)<(2k-5)^2,
  \]
  since the difference is $(k-3)^2$.  This contradiction shows that no non-monotone permutation of
  length $(2k-5)^2+1$ can avoid $B_k$.
\end{proof}

The bound above is still far from the largest avoiders found computationally.
The next two constructions give quadratic lower bounds.
\begin{definition}
  For $k \geq 4$, let $E_k$ be the \emph{staircase permutation} obtained by concatenating an increasing block $A$ of
  length $k-3$, then $k-2$ increasing blocks $C_1,\dots,C_{k-2}$ of length $k-2$ arranged so that
  every entry of $C_i$ is larger than every entry of $C_j$ whenever $i<j$, and finally another
  increasing block $B$ of length $k-3$.  Thus
  \[
    |E_k|=(k-3)+(k-2)^2+(k-3)=k^2-2k-2.
  \]
\end{definition}

\begin{proposition}\label{prop:staircase-lower-bound}
  For every $k \geq 4$, the staircase permutation $E_k$ avoids $B_k$.  Consequently,
  \[
    N_k \geq k^2-2k-1.
  \]
\end{proposition}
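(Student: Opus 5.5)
The plan is to fix the value layout of $E_k$ concretely, compute its longest monotone subsequences, and then rule out each of the eight patterns of $B_k$ directly. I read the definition as follows: the values of $A$ lie below all values of the $C_i$, the values of $B$ lie above all of them, and the $C_i$ are stacked in decreasing order. This is the reading under which $E_k$ is a genuine staircase. It also makes $\mathrm{LIS}(E_k)=(k-3)+(k-2)+(k-3)=3k-8$ and $\mathrm{LDS}(E_k)=k-2$, matching the extremal behaviour allowed by Theorem~\ref{thm:quadratic-bound} and Proposition~\ref{prop:intermediate-lis-lds}.

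The first step handles four patterns at once. Each of $q_k, p_k^c, r_k^c, s_k^c$ contains a decreasing subsequence of length $k-1$. For example, $q_k=1\,k\,(k-1)\cdots 2$ ends in $k\,(k-1)\cdots 2$, and $r_k^c=(k-1)\,k\,(k-2)\cdots 1$ contains $k\,(k-2)\cdots 1$. A decreasing subsequence of $E_k$ cannot use two entries of the same increasing block. It also cannot use an entry of $A$ together with a later entry, nor an entry of $B$ together with an earlier entry. So it uses at most one entry from each $C_i$, giving $\mathrm{LDS}(E_k)=k-2$, and these four patterns are avoided.

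The second step treats $p_k, r_k, s_k, q_k^c$. Each of these is an increasing run of length $k-2$ or $k-1$ together with one inversion. The key observation is that every inversion $x$ before $y$ with $x>y$ in $E_k$ has $x\in C_i$ and $y\in C_j$ for some $i<j$.
\begin{itemize}
\item For $p_k=1\cdots(k-2)\,k\,(k-1)$, we need $k-2$ increasing entries before $x$ and below $y$. Entries of $C_l$ with $l\le i$ are not below $y$, and nothing else precedes $x$ except $A$. This leaves only $A$, which has $k-3$ entries.
\item Symmetrically, $r_k$ needs $k-2$ increasing entries after $y$ and above $x$, and only $B$ qualifies.
\item For $s_k=2\cdots k\,1$, we need $k-1$ increasing entries before $y\in C_j$ and above $y$. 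These lie in $C_1\cup\cdots\cup C_{j-1}$, where an increasing subsequence stays inside one block and so has length at most $k-2$.
\item For $q_k^c=k\,1\,2\cdots(k-1)$, we need $k-1$ increasing entries after $x\in C_i$ and below $x$. These lie in $C_{i+1}\cup\cdots\cup C_{k-2}$, which again gives at most $k-2$.
\end{itemize}

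Finally, $E_k$ is non-monotone for $k\ge 4$, since it has at least two $C$-blocks. So $\Av_{|E_k|}(B_k)\ne\{\id,\id^r\}$, which gives $N_k\ge |E_k|+1=k^2-2k-1$. I expect the main obstacle to be the bookkeeping in the second step: making the value layout explicit and checking that each of the four inversion-based patterns forces the increasing run into a region of size at most $k-3$ or $k-2$. I would state the inversion observation as a separate claim first, so that the four cases become one-line checks.
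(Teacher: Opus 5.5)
Your proof is correct and uses the same three-way split as the paper. The patterns $q_k,p_k^c,r_k^c,s_k^c$ are excluded because $E_k$ has only $k-2$ skew blocks (your $\mathrm{LDS}(E_k)=k-2$); $p_k,r_k$ because the increasing prefix or suffix can only come from $A$ or $B$, each of length $k-3$; and $s_k,q_k^c$ because an increasing run among the $C_i$ stays inside one block of size $k-2$. The paper packages these facts through the pattern-closed family $G(a;c_1,\dots,c_m;b)$ with parameter bounds, whereas your inversion lemma argues them directly, which avoids the paper's tacit reliance on each pattern in $B_k$ having a unique $G$-representation.
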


\begin{proof}
  For this proof, write
  \[
    G(a;c_1,\dots,c_m;b)=\id_a \oplus (\id_{c_1}\ominus \cdots \ominus \id_{c_m}) \oplus \id_b.
  \]
  Then
  \[
    E_k=G(k-3;\underbrace{k-2,\dots,k-2}_{k-2};k-3).
  \]
  Every pattern contained in $E_k$ is again of the form $G(a;c_1,\dots,c_m;b)$, with parameters
  weakly smaller than those of $E_k$.  In particular, any pattern in $E_k$ satisfies
  \[
    a\leq k-3,\qquad b\leq k-3,\qquad m\leq k-2,\qquad c_i\leq k-2\ \text{ for all }i.
  \]

  The eight patterns in $B_k$ appear in this family as
  \begin{align*}
    p_k   & = G(k-2;1,1;0),                          &
    q_k   & = G(1;\underbrace{1,\dots,1}_{k-1};0),     \\
    r_k   & = G(0;1,1;k-2),                          &
    s_k   & = G(0;k-1,1;0),                            \\
    p_k^c & = G(0;\underbrace{1,\dots,1}_{k-2},2;0), &
    q_k^c & = G(0;1,k-1;0),                            \\
    r_k^c & = G(0;2,\underbrace{1,\dots,1}_{k-2};0), &
    s_k^c & = G(0;\underbrace{1,\dots,1}_{k-1};1).
  \end{align*}

  Now $p_k$ and $r_k$ are impossible because they require $a=k-2$ or $b=k-2$, whereas every pattern
  in $E_k$ has $a,b\leq k-3$.  The patterns $q_k$, $p_k^c$, $r_k^c$, and $s_k^c$ are impossible
  because they require at least $k-1$ skew blocks, whereas every pattern in $E_k$ has at most $k-2$
  such blocks.  Finally, $s_k$ and $q_k^c$ are impossible because they require a skew block of size
  $k-1$, whereas every block occurring in a pattern of $E_k$ has size at most $k-2$.

  Therefore $E_k$ avoids all eight patterns in $B_k$.  Since $|E_k|=k^2-2k-2$, the threshold
  satisfies $N_k \geq k^2-2k-1$.
\end{proof}

\begin{proposition}\label{prop:double-layered-lower-bound}
  For $k \geq 4$, let
  \[
    L_k=\underbrace{\id_{k-3}\ominus \cdots \ominus \id_{k-3}}_{k-2\text{ summands}}
  \]
  and set
  \[
    H_k=L_k\oplus L_k.
  \]
  Then $H_k$ avoids $B_k$.  Consequently,
  \[
    N_k \geq 2(k-2)(k-3)+1.
  \]
\end{proposition}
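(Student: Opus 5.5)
The plan is to describe every pattern contained in $H_k$ through its sum and skew decompositions, as in the proof of Proposition~\ref{prop:staircase-lower-bound}. Then, for each of the eight patterns of $B_k$, I will show that its decomposition does not fit.

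First, I characterise the patterns of $L_k$. A pattern of $L_k$ is a skew sum $\id_{c_1}\ominus\cdots\ominus\id_{c_m}$ of increasing blocks with $m\le k-2$ and every $c_i\le k-3$. Two consequences will be used repeatedly:
\begin{itemize}
\item such a pattern is either increasing of length at most $k-3$ (the case $m=1$), or it is sum-indecomposable (the case $m\ge 2$);
\item its longest decreasing subsequence has length at most $k-2$.
\end{itemize}

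Next, I describe the patterns of $H_k$. Any occurrence of a pattern $\tau$ in $H_k=L_k\oplus L_k$ splits as $\tau=\alpha\oplus\beta$, with $\alpha$ and $\beta$ (possibly empty) patterns of $L_k$. In particular, a sum-indecomposable $\tau$ must itself be a pattern of $L_k$.

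Then I go through $B_k$, grouping the patterns by shape.
\begin{itemize}
\item The patterns $s_k=\id_{k-1}\ominus 1$ and $q_k^c=1\ominus\id_{k-1}$ are sum-indecomposable. So they would have to lie in $L_k$, which needs an increasing block of length $k-1>k-3$.
\item The patterns $p_k^c$ and $r_k^c$ are also sum-indecomposable. Each is a skew sum of $k-1$ layers, namely $k-2$ singletons together with a block $12$. Embedding one in $L_k$ would require $k-1$ distinct layers, since the decreasing spine has length $k-1$, but $L_k$ has only $k-2$ layers.
\item The pattern $s_k^c=(k{-}1\cdots1)\oplus1$ contains a decreasing sequence of length $k-1$. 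This sequence must lie entirely inside $\alpha$ or inside $\beta$, which contradicts the bound $\mathrm{LDS}\le k-2$ for patterns of $L_k$.
\item The same argument rules out $q_k=1\oplus(k\cdots2)$.
\item For $p_k=\id_{k-2}\oplus 21$, write $p_k=\alpha\oplus\beta$. The component $21$ lies in $\alpha$ or in $\beta$, and that part is then not increasing, so it must be sum-indecomposable and hence equal to exactly $21$. If it is $\beta$, then $\alpha=\id_{k-2}$ is an increasing pattern of $L_k$ of length $k-2>k-3$, which is impossible. If it is $\alpha$, then $\alpha$ would have to be $\id_{k-2}\oplus21$, which is sum-decomposable and not increasing, also impossible.
\item The pattern $r_k=21\oplus\id_{k-2}$ is handled symmetrically.
\end{itemize}

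Finally, $H_k$ is non-monotone, because it contains the pattern $21\oplus21=2143$. It has length $2(k-2)(k-3)$, so $N_k\ge 2(k-2)(k-3)+1$ follows.

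The main obstacle is the bookkeeping in the case analysis, especially making the split lemma precise: I need to check that the sum components of $\tau$ are distributed between $\alpha$ and $\beta$ as a prefix and a suffix. Once that lemma is stated cleanly, each of the eight cases reduces to one of three numerical bounds: block size at most $k-3$, number of layers at most $k-2$, or $\mathrm{LDS}$ at most $k-2$.
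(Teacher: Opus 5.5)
Your proposal is correct and follows essentially the paper's route. Both write every pattern of $H_k$ as $\alpha\oplus\beta$, where $\alpha,\beta$ are skew sums of at most $k-2$ increasing blocks of size at most $k-3$, and both rule out each element of $B_k$ by a block-size or layer-count bound; your LDS bound is the paper's count of skew blocks, and your explicit split argument for $p_k,r_k$ makes precise the uniqueness of representation that the paper uses implicitly. Your case assignment is also the accurate one: $s_k=\id_{k-1}\ominus 1$ fails by block size and $r_k^c$ by layer count, whereas the paper's displayed $D$-forms for $s_k$ and $s_k^c$ are really those of $s_k^c$ and $p_k^c$, and it groups $r_k^c$ with the block-size cases — a labeling slip that does not affect the conclusion.
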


\begin{proof}
  For this proof, write
  \[
    D(a_1,\dots,a_m\,;\,b_1,\dots,b_n)
    =(\id_{a_1}\ominus\cdots\ominus\id_{a_m})\oplus(\id_{b_1}\ominus\cdots\ominus\id_{b_n}),
  \]
  where either list may be empty.  Then
  \[
    H_k=D(\underbrace{k-3,\dots,k-3}_{k-2}\,;\,\underbrace{k-3,\dots,k-3}_{k-2}).
  \]
  Every pattern contained in $H_k$ is again of the form $D(a_1,\dots,a_m\,;\,b_1,\dots,b_n)$, with
  weakly smaller parameters.  In particular, any pattern in $H_k$ satisfies
  \[
    m\leq k-2,\qquad n\leq k-2,\qquad a_i\leq k-3,\qquad b_j\leq k-3.
  \]

  The eight patterns in $B_k$ appear in this family as
  \begin{align*}
    p_k   & = D(k-2\,;\,1,1),                         &
    q_k   & = D(1\,;\,\underbrace{1,\dots,1}_{k-1}),    \\
    r_k   & = D(1,1\,;\,k-2),                         &
    s_k   & = D(\underbrace{1,\dots,1}_{k-1}\,;\,1),    \\
    p_k^c & = D(\underbrace{1,\dots,1}_{k-2},2\,;\,), &
    q_k^c & = D(1,k-1\,;\,),                            \\
    r_k^c & = D(\,;\,2,\underbrace{1,\dots,1}_{k-2}), &
    s_k^c & = D(\,;\,\underbrace{1,\dots,1}_{k-2},2).
  \end{align*}

  Now $p_k$ and $r_k$ are impossible because they require a block of size $k-2$, whereas every block
  in a pattern of $H_k$ has size at most $k-3$.  The patterns $q_k$, $s_k$, $p_k^c$, and $s_k^c$
  are impossible because they require $k-1$ skew blocks on one side, whereas each side of a pattern
  in $H_k$ has at most $k-2$ such blocks.  Finally, $q_k^c$ and $r_k^c$ are impossible because they
  require a block of size $k-1$, again larger than any block occurring in a pattern of $H_k$.

  Thus $H_k$ avoids all eight patterns in $B_k$.  Since
  \[
    |H_k|=2(k-2)(k-3),
  \]
  the threshold satisfies $N_k \geq 2(k-2)(k-3)+1$.
\end{proof}

\begin{example}\label{ex:H6}
  The first case where the family $H_k$ improves the staircase lower bound is $k=6$.  Here
  \[
    L_6=10\,11\,12\,7\,8\,9\,4\,5\,6\,1\,2\,3,
  \]
  so
  \[
    H_6=L_6\oplus L_6
    =10\,11\,12\,7\,8\,9\,4\,5\,6\,1\,2\,3\,22\,23\,24\,19\,20\,21\,16\,17\,18\,13\,14\,15.
  \]
  Thus $H_6$ is the direct sum of two copies of a $4$-block layered permutation, each block being an
  increasing sequence of length $3$.  Figure~\ref{fig:H6} shows this explicitly.
\end{example}

\begin{figure}[ht]
  \centering
  \begin{tikzpicture}[scale=0.27]
    \fill[gray!10] (0,0) rectangle (12,12);
    \fill[gray!10] (12,12) rectangle (24,24);

    \draw[gray!55] (0,0) grid (24,24);
    \draw[thick] (0,0) rectangle (24,24);
    \draw[very thick] (0,0) rectangle (12,12);
    \draw[very thick] (12,12) rectangle (24,24);
    \draw[dashed] (12,0) -- (12,24);
    \draw[dashed] (0,12) -- (24,12);

    \foreach \x/\y in {0/9,3/6,6/3,9/0,12/21,15/18,18/15,21/12} {
        \draw[thick] (\x,\y) rectangle ++(3,3);
      }

    \foreach \x/\y in {
        1/10,2/11,3/12,4/7,5/8,6/9,7/4,8/5,9/6,10/1,11/2,12/3
      } {
        \fill[black] (\x-0.5,\y-0.5) circle (3.2pt);
      }
    \foreach \x/\y in {
        13/22,14/23,15/24,16/19,17/20,18/21,19/16,20/17,21/18,22/13,23/14,24/15
      } {
        \fill[black] (\x-0.5,\y-0.5) circle (3.2pt);
      }

    \node at (6,12.9) {\small $L^-$};
    \node at (18,24.9) {\small $L^+$};
  \end{tikzpicture}
  \caption{The permutation $H_6=L_6\oplus L_6$.  Each $3\times 3$ box is an increasing block;
    the two larger highlighted squares are the copies $L^-$ and $L^+$.}
  \label{fig:H6}
\end{figure}
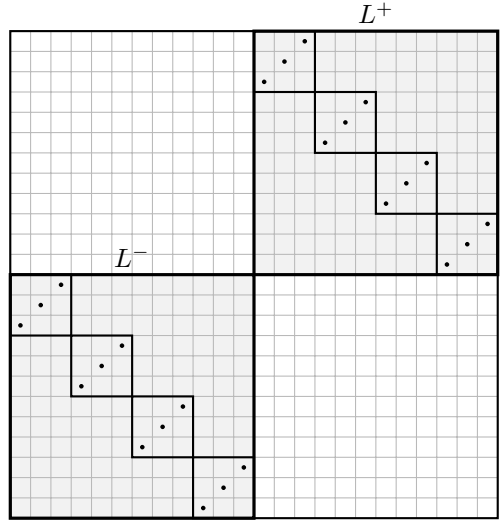

We now have
\[
  \max\{k^2-2k-1,\ 2(k-2)(k-3)+1\}\leq N_k\leq (2k-5)^2+1\qquad\text{for all }k\geq 4,
\]
while $N_3=1$.  In particular,
\[
  7 \leq N_4 \leq 10,\qquad 14 \leq N_5 \leq 26,\qquad 25 \leq N_6 \leq 50.
\]

\begin{proposition}\label{prop:exact-thresholds}
  We have
  \[
    N_3=1,\qquad N_4=7,\qquad N_5=14,\qquad N_6=25.
  \]
\end{proposition}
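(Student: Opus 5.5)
The lower bounds are already in hand, so the plan is to pair them with matching upper bounds obtained by finite computation. For $k=3$, Lemma~\ref{lem:k3-avoidance} gives $\Av_n(B_3)=\{\id_n,\id_n^r\}$ for $n\geq 4$. For $n=1,2,3$ we check directly that the only avoiders are monotone. For $n\le 2$ this is vacuous. For $n=3$ we have $\Sym_3\setminus B_3=\{123,321\}$. Hence $N_3=1$.

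For $k=4,5,6$, the lower bounds $N_4\geq 7$, $N_5\geq 14$ and $N_6\geq 25$ follow from Propositions~\ref{prop:staircase-lower-bound} and~\ref{prop:double-layered-lower-bound}. The witnesses are $E_4$ of length $6$, $E_5$ of length $13$, and $H_6$ of length $24$. Each is a non-monotone avoider of $B_k$ of length $N_k-1$. For the upper bounds, the key observation is that $\Av(B_k)$ is a permutation class, so it is closed under taking patterns. Suppose some non-monotone $\pi\in\Av_n(B_k)$ exists with $n>L$. Then deleting suitable entries yields a non-monotone avoider of every length $m$ with $3\le m\le n$. Indeed, a non-monotone permutation of length $n\ge 4$ has a one-point deletion that is non-monotone, since it contains some $3$-point non-monotone pattern and we may delete an entry outside it. Therefore it suffices to show that $\Av_{L+1}(B_k)$ contains only the two monotone permutations, with $L=6,13,24$ respectively.

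To carry out this check, we would generate $\Av_m(B_k)$ level by level, extending each avoider of length $m$ by inserting one new entry in all possible ways. A candidate is kept only if every $k$-subsequence using the new entry avoids $B_k$. The generation stops once the only survivors are $\id_m$ and $\id_m^r$. This is the standard insertion-encoding or permuta-style enumeration referenced in Section~\ref{sec:repro}. The growth is controlled because, by Theorem~\ref{thm:quadratic-bound} and Proposition~\ref{prop:intermediate-lis-lds}, every surviving non-monotone permutation has bounded LIS and LDS. The computation must confirm that non-monotone avoiders exist at lengths $6$, $13$ and $24$ but not at lengths $7$, $14$ and $25$. For $k=4$ the search space is small, and it can also be checked directly against the bound $N_4\le 10$ of Corollary~\ref{cor:better-quadratic-bound}.

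The main obstacle is $k=6$. There the class must be enumerated up to length $25$, and the number of avoiders at intermediate lengths may be very large. This matches the paper's note that the $N_6$ computation is done separately. To control it, I would reduce by the symmetry group that preserves $B_k$, namely reverse and complement, and also inverse if it fixes $B_6$. I would also prune using the structure from the proof of Proposition~\ref{prop:intermediate-lis-lds}. Once $\mathrm{LIS}\ge 2k-4=8$, the permutation must begin with its minimum and have $\mathrm{LDS}\le 4$, which sharply restricts the insertions. If direct enumeration is still too costly, I would try a SAT or ILP formulation of the statement that a non-monotone permutation of length $25$ avoids $B_6$, and show it is infeasible.
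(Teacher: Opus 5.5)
Your proposal follows the paper's proof: the lower bounds come from the non-monotone avoiders $E_4$, $E_5$ and $H_6$, the matching upper bounds come from exhaustively checking that $\Av_{7}(B_4)$, $\Av_{14}(B_5)$ and $\Av_{25}(B_6)$ contain only the two monotone permutations, and closure of $\Av(B_k)$ under pattern containment makes this persist to all larger lengths. You justify the persistence step explicitly by deleting an entry outside a non-monotone $3$-point occurrence, which the paper only asserts, and your $k=3$ argument via Lemma~\ref{lem:k3-avoidance} plus a direct check for $n\le 3$ is equivalent to the paper's one-line observation.
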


\begin{proof}
  For $k=3$, we have $B_3=\{132,213,231,312\}$, so every permutation of length at least $1$ avoiding
  $B_3$ is monotone, and $N_3=1$.

  For $k=4,5,6$, the lower bounds from Proposition~\ref{prop:staircase-lower-bound} and
  Proposition~\ref{prop:double-layered-lower-bound} already give
  \[
    N_4\geq 7,\qquad N_5\geq 14,\qquad N_6\geq 25.
  \]
  The classes $\Av_n(B_k)$ were then enumerated until the threshold value was reached.
  For $k=4,5,6$, this happens at $n=7,14,25$, respectively.

  Once this happens at some length $m \geq k$, it persists for all larger lengths.
  These computations are reproduced in
  Section~\ref{sec:repro}; except for $k=6$ which used the incremental fast avoider-generation methods of
  Kuszmaul~\cite{kuszmaul}.
\end{proof}

\section{Concluding remarks}

The sets $B_k$ seem to be the right finite obstruction sets for single-length PAP\@.  They are
minimal by Theorem~\ref{thm:minimality}, eventually monotone forcing by
Theorem~\ref{thm:quadratic-bound} and Corollary~\ref{cor:better-quadratic-bound}, and the thresholds for
$k=3,4,5,6$, are
$N_3=1$, $N_4=7$, $N_5=14$, and $N_6=25$.  For $k=3$, the reverse strategy is completely understood
(Theorem~\ref{thm:k3}); for $k=4$, the reverse strategy is now settled as well, with
exceptions at $n=5,6,7,8,9$ and a large-$n$ theorem proved via stable one-entry inflation
families.  The staircase permutations $E_k$ from
Proposition~\ref{prop:staircase-lower-bound} show that
\[
  N_k\geq k^2-2k-1,
\]
while Proposition~\ref{prop:double-layered-lower-bound} gives the stronger bound
\[
  N_k\geq 2(k-2)(k-3)+1
\]
for $k \geq 6$, and already yields $N_6 \geq 25$.

The next step is to push the same program to $k=5$ and beyond
(Conjecture~\ref{conj:general-reverse}).

There are also several natural game-theoretic variants that are not pursued here.  Since PAP
positions are finite impartial games with well-defined Sprague--Grundy values, one can consider
disjunctive sums of PAP positions, for example sums of starting positions $\Sym_n$ with different
pattern lengths.  The normal-play convention is also not essential: because every play has length at
most $k!$, a mis\`ere version is just as natural, although the reverse-reply arguments used here do
not immediately transfer to that setting.  Finally, one could study partizan versions in which the
two players are allowed to choose from different sets of patterns, or even from different allowed
pattern lengths.  These variants seem likely to require ideas beyond the present paper, but they fit
naturally with the viewpoint developed here.

\section*{Acknowledgements}

We thank Jay Pantone for running Kuszmaul's code to obtain the value $N_6=25$.  We also thank
Magn\'us M\'ar Halld\'orsson, Hjalti Magn\'usson, Joshua Sack, and Einar Steingr\'imsson for
discussions during the early stages of this work.  We further thank Magn\'us M\'ar Halld\'orsson
for helpful discussions relating to the computational testing of our conjectures.

Large language models were used in this work, more precisely ChatGPT-5.4 and Claude Opus 4.6.
They were used for finding examples, writing code, and shortening and tightening proofs,
which in some cases led to better arguments.  On the whole, we can recommend the use of
an AI partner in mathematics research, provided that one maintains tight control: these systems can
disappear into rabbit holes, start expensive computations and then forget about them, and produce
arguments that require careful untangling to eliminate circular reasoning. All errors are still
the sole responsibility of the human author.

\section{Computational reproducibility}\label{sec:repro}

The computer-assisted statements in the main body are all exact and exhaustive.  The scripts in
\texttt{code-permuta/} are available in the public repository
\url{https://github.com/ulfarsson/Game_patterns}.  They are limited to the computations needed for
the main paper, with the sole exception that the value $N_6=25$ was obtained separately and
is not part of the routine Python bundle.  The runs cited here were checked with Python 3.11 and
\texttt{permuta}~\cite{permuta-repo}.  After installing \texttt{permuta}, the commands below can
be run directly from the repository root.

To reproduce the starting-position data from Section~2, including Table~\ref{tab:sgvalues},
Proposition~\ref{prop:k4-reverse-data}, and the numbers in
Example~\ref{ex:n9-reverse-fails}, together with the optimal-play length distributions shown in
Figure~\ref{fig:k4-optimal-lengths}, run:
\begin{verbatim}
python code-permuta/pap_starting_data_permuta.py table
python code-permuta/pap_starting_data_permuta.py reverse-k4
python code-permuta/pap_starting_data_permuta.py \
  optimal-dist n 4
python code-permuta/pap_starting_data_permuta.py \
  query 9 4 1234,4321,1324
python code-permuta/pap_starting_data_permuta.py \
  query 9 4 1234,4321,1324,4231
\end{verbatim}
The two query commands produce the counts $334$ and $2$ from
Example~\ref{ex:n9-reverse-fails} together with the
winning replies in that state, printed in \texttt{permuta}'s native zero-based notation.  For each
$n \in \{5,6,7,8,9,10\}$, the \texttt{optimal-dist} command prints the number of optimal play
lines of each possible length from the starting position~$\Sym_n$.

To reproduce the $k=4$ computations from Section~6 that are used directly in the proof,
namely the residual $n=7$ support classification in
Proposition~\ref{prop:residual-nonhard-inflations} and the hard-pair computation in
Proposition~\ref{prop:hard-pair-families}, run:
\begin{verbatim}
python code-permuta/k4_reverse_reply_permuta.py all
\end{verbatim}
This script prints the summary lines
\begin{verbatim}
gap_cases ok checks=1156
hard_pair ok supports=138 legal_states=543
\end{verbatim}

To reproduce the threshold data from Proposition~\ref{prop:exact-thresholds} up to $k=5$, run:
\begin{verbatim}
python code-permuta/bk_thresholds_permuta.py 3 6
python code-permuta/bk_thresholds_permuta.py 4 10
python code-permuta/bk_thresholds_permuta.py 5 20
\end{verbatim}
These commands print the counts $|\Av_n(B_k)|$ and the first length at which only the two
monotone permutations remain, equivalently the threshold value $N_k$; for $k=3$ this threshold is
$1$ because $\id_1=\id_1^r$.  The value $N_6=25$ was obtained separately and is not part of
the routine \texttt{permuta} bundle.

To reproduce the $k=5$ full-space computations cited in the discussion after
Theorem~\ref{thm:k4} and in the conclusion, run:
\begin{verbatim}
python code-permuta/k5_fullspace_data_permuta.py all
\end{verbatim}
This prints the counts
\begin{verbatim}
one_companion_targets=28 total_nonB5_nonmonotone=110
full_space_separable_pairs=59 total_nonmonotone_pairs=59
\end{verbatim}
and also lists the corresponding targets and confirms that no reverse pair is
inseparable in $\Sym_8$.

\bibliographystyle{amsplain}
\bibliography{references}

\end{document}